\newtheorem{thm}{Theorem}[section]
\newtheorem{prop}[thm]{Proposition}
\newtheorem{lem}[thm]{Lemma}
\newtheorem{cor}[thm]{Corollary}
\theoremstyle{definition}
\newtheorem{defn}[thm]{Definition}
\newtheorem{rmk}[thm]{Remark}
\newtheorem{ex}[thm]{Example}
\newtheorem*{thank}{Acknowledgments}
\newcommand{\del}[1]{\ensuremath{\delta^{#1}}}
\newcommand{\sig}[1]{\ensuremath{\sigma^{#1}}}
\newcommand{\be}[1]{\ensuremath{\sigma^{#1}}}
\newcommand{\al}[2]{\ensuremath{\delta_{#1}^{#2}}}
\newcommand{\cc}[2]{\ensuremath{c^{#1}_{#2}}}
\newcommand{\ve}{\ensuremath{\iota}}
\newcommand{\e}{\ensuremath{\epsilon}}
\newcommand{\A}{\mathcal{A}}
\newcommand{\B}{\mathcal{B}}
\newcommand{\G}{\mathbb{G}}
\newcommand{\Cat}{\mathrm{\bf Cat}}
\newcommand{\Set}{\mathrm{\bf Set}}
\newcommand{\sk}{\mathrm{sk}}
\newcommand{\cosk}{\mathrm{cosk}}
\newcommand{\op}{\mathrm{op}}
\newcommand{\dgn}{\mathrm{dgn}}
\newcommand{\id}{\mathrm{id}}
\begin{document}

\title{Levels in the toposes of simplicial sets and cubical sets}
\author{Carolyn Kennett, Emily Riehl, Michael Roy, Michael Zaks}
\date{\today}
\maketitle

\begin{abstract} The essential subtoposes of a fixed topos form a complete lattice, which gives rise to the notion of a \emph{level} in a topos. In the familiar example of simplicial sets, levels coincide with dimensions and give rise to the usual notions of $n$-skeletal and $n$-coskeletal simplicial sets. In addition to the obvious ordering, the levels provide a stricter means of comparing the complexity of objects, which is determined by the answer to the following question posed by Bill Lawvere: when does $n$-skeletal imply $k$-coskeletal?
This paper, which subsumes earlier unpublished work of some of the authors, answers this question for several toposes of interest to homotopy theory and higher category theory: simplicial sets, cubical sets, and reflexive globular sets. For the latter, $n$-skeletal implies $(n+1)$-coskeletal but for the other two examples the situation is considerably more complicated: $n$-skeletal implies $(2n-1)$-coskeletal for simplicial sets and $2n$-coskeletal for cubical sets, but nothing stronger. In a discussion of further applications, we prove that $n$-skeletal cyclic sets are necessarily $(2n+1)$-coskeletal.
\end{abstract}

\tableofcontents

\section{Introduction}

Consider a geometric morphism between toposes $\B$ and $\A$, i.e., a functor $\B \rightarrow \A$ with a finite limit preserving left adjoint. If the right adjoint is fully faithful, we say that $\B$ is a \emph{subtopos} of $\A$. If the left adjoint itself has a left adjoint, then we say $\B$ is an \emph{essential subtopos} of $\A$, in which case we have a diagram:
$$\xymatrix{ \A \ar[r]|-{i^*}^-*+{\perp}_-*+{\perp} & \B \ar@/^1.5pc/[l]^{i_*} \ar@/_1.5pc/[l]_{i_!}}$$ The right adjoint inclusion of $\B$ into $\A$ is a geometric morphism, which we think of as the sheaf inclusion of the essential subtopos. By contrast, the left adjoint inclusion, sometimes called ``essentiality,'' is not typically a geometric morphism, though in examples this is often the more natural way to think about objects of the subtopos in the context of the larger topos.

Kelly and Lawvere show that the essential subtoposes of a given topos form a complete lattice \cite{kellylawverecompletelattice}. In light of this result, each such subtopos $\B$ is referred to as a \emph{level} of $\A$. For each level $\B$, $i_! i^*$ defines a comonad $\sk_{\B}$ and $i_*i^*$ defines a monad $\cosk_{\B}$ on $\A$ such that $\sk_{\B}$ is left adjoint to $\cosk_{\B}$. 

For example, suppose $\A$ is the topos of presheaves on some small category $\Delta$. Any fully faithful inclusion $i : \Delta' \hookrightarrow \Delta$ induces functors $$\xymatrix@R=50pt{\Set^{\Delta^{\op}}\ar[r]|-{i^*}_-*+{\perp}^-*+{\perp} & \Set^{(\Delta')^{\op}} \ar@/_1.5pc/[l]_{i_!} \ar@/^1.5pc/[l]^{i_*}}$$ where $i^*$ is restriction and $i_!$ and $i_*$ are left and right Kan extension. These functors exhibit $\Set^{\Delta'^{\op}}$ as an essential subtopos of $\Set^{\Delta^{\op}}$. Up to isomorphism, the functor $i^*$ is a common retraction of $i_!$ and $i_*$, which are both fully faithful. This situation has been called \emph{unity and identity of opposites} \cite{lawvereuiop}, \cite{lawveresomethoughts}.

An object $A$ of $\A$ is $\B$-\emph{skeletal} if $A \cong \sk_{\B}A$; likewise $A$ is $\B$-\emph{coskeletal} if $A \cong \cosk_{\B}A$. A level $\B'$ is \emph{lower than} a level $\B$ if the skeletal and coskeletal inclusions of $\B'$ into $\A$ factor through the skeletal and coskeletal inclusions, respectively, of $\B$ in $\A$. In the above example, the category of presheaves on a full subcategory $\Delta'' \hookrightarrow \Delta'$ is lower than the category of presheaves on $\Delta'$ . A level $\B'$ is \emph{way below} a level $\B$ if in addition its skeletal inclusion into $\A$ factors through the coskeletal inclusion of $\B$ in $\A$, i.e., if $\B'$-skeletal implies $\B$-coskeletal. The smallest level $\B$ in the lattice of essential subtoposes of $\A$ for which this condition holds, if such a level exists, is called the \emph{Aufhebung} of $\B'$, terminology introduced by Lawvere in deference to Hegel \cite{lawveresomethoughts}. 

In three toposes which have been important for the study of homotopy theory and higher category theory --- simplicial sets \cite{gabrielzisman} \cite{maysimplicial}, cubical sets \cite{kanabstractI}, and reflexive globular sets \cite{streetpetittopos} --- levels coincide with dimensions: the category of presheaves on a small category is equivalent to the presheaves on its Cauchy completion. Up to splitting of idempotents, the distinct full subcategories of, e.g., the simplicial category $\Delta$ are the categories $\Delta_n$ on objects $[0],\ldots,[n]$ for each natural number. Thus, dimensions classify the essential subtoposes of the category of simplicial sets; a similar proof works for the other examples. For these toposes, a level $n$ is lower than a level $k$ precisely when $n \leq k$, and the question of determining the Aufhebung of the level $n$ can be stated more colloquially: when does $n$-skeletal imply $k$-coskeletal?

Naively, one might hope that $n$-skeletal implies $(n+1)$-coskeletal, and for reflexive globular sets this is indeed the case, as was first observed by Roy \cite{roytopos}. A \emph{reflexive globular set} is a presheaf on the globe category $\G$, with the natural numbers as objects and maps of the form $\sigma,\tau : n \rightarrow n+1$ such that $\tau \sigma = \sigma \sigma$ and $\tau \tau = \sigma \tau$ and  $\iota : n+1 \rightarrow n$ such that $\iota \sigma = \id = \iota\tau$. For reflexive globular sets, $n$-skeletal implies $(n+1)$-coskeletal:

\begin{ex} A reflexive globular set is $n$-skeletal if and only if the only globs above level $n$ are identities and $(n+1)$-coskeletal if and only if there exists a unique filler for each parallel pair of $k$-globs, for $k > n$. Hence, the arrows of any parallel pair of $k$-globs for $k > n$ are both equal to the identity $k$-glob on (necessarily equal) domain and codomain. Such pairs are filled uniquely by their image under $\iota$. This shows that $n$-skeletal implies $(n+1)$-coskeletal, and it is easy to construct examples to show this implication is as strong as possible.
\end{ex}

However, for simplicial sets or cubical sets, the situation is rather more complicated. Some of this work was done over 20 years ago \cite{zaksskeletal} but was never published. In light of continued interest in this problem \cite{lawverefunctorialconcepts} \cite{lawvereopenproblems}, the authors thought it was important that this work enter the literature in an easily accessible form.

The main goal of this paper is to determine the Aufhebung relation in two particular cases, that of simplicial sets and cubical sets. We will show in Theorems \ref{rcubesetthm} and \ref{ssetthm} that the Aufhebung relation for cubical sets is $2n$ and for simplicial sets is $2n-1$. The upper bound on the Aufhebung for simplicial sets is due to Zaks \cite{zaksskeletal} and the upper bound for cubical sets is due to Kennett and Roy \cite{kennettroyzaksanalysis}. The remaining author provided the examples which prove that these bounds are optimal and cleaned up the exposition.

The combinatorics involved in the proof for cubical sets is simpler, so we begin in \S \ref{cubesec} with this case, even though the proof for simplicial sets was discovered first. In \S \ref{simpsec}, we provide a complete proof for simplicial sets without reference to cubical sets, so that the reader who is only interested in that topos can skip directly there. Note that we have adopted similar notation for the face and degeneracy maps of simplicial and cubical sets to emphasize the analogy between the proofs for these toposes. As a result, notation introduced in \S \ref{cubesec} is redefined in \S \ref{simpsec}. Due to the logical independence of these sections, there should be no danger of confusion.

\begin{rmk} The apparent similarities in the arguments we present for the cubical and simplicial cases are related to the fact that the simplicial category $\Delta$ and the cube category $\mathbb{I}$ are both Reedy categories such that the degree-lowering arrows are uniquely determined by the set of their sections. (This last fact enables the proof of the Eilenberg-Zilber lemma.)  We expect that the combinatorial arguments presented in this paper could easily be adapted to similar situations, but without any other examples in mind, we were insufficiently motivated to do so ourselves.
\end{rmk}

We conclude both sections with a discussion of potential generalisations of these results that describes what seems to be possible as well as highlighting several pitfalls. In particular, we prove in \S \ref{simpsec} that the Aufhebung relation for cyclic sets, another topos of interest to homotopy theorists, is between $2n-1$ and $2n+1$. We hope that these remarks will aid future investigations relating to this problem.

\begin{thank}
The second author would like to thank the members of the Australian Category Seminar for being wonderful hosts, Ross Street for introducing her to this work, Richard Garner and Dominic Verity for stimulating conversations on this topic, and her advisor Peter May for his continued support. 
\end{thank}

\section{Aufhebung of cubical sets}\label{cubesec}

There are many variants in the notion of cubical sets \cite{grandismauricubical}, which are defined to be presheaves on various cubical categories. We present the most elementary notion, popularized by Daniel Kan \cite{kanabstractI}. Other versions of the cubical category contain the one described here, and for some of these variants, we expect that some results can be deduced from this one. See Remark \ref{othercubermk}.

We write $I$ for the poset category $0 < 1$. Note that $I$ is an interval object: there are two maps $\ve : * \rightarrow I$ with $\ve = 0,1$ from the terminal category to $I$ and a projection $I \rightarrow *$ that is a common retraction of these maps. The \emph{cube category} $\mathbb{I} \subset \Cat$ is the free monoidal category containing an interval object. 

Concretely, its objects are the elementary cubes $I^n$ for each $n \in \mathbb{N}$. Its morphisms are generated by the elementary face and degeneracy maps, defined on coordinates by 
\begin{align*} &\al{\ve}{i} : I^{n-1} \hookrightarrow I^n  &&\mbox{where} \quad \al{\ve}{i} = \langle \pi_1,...,\pi_{i-1},\ve,\pi_i,...,\pi_{n-1}\rangle, &&(i=1,...,n; \ve=0,1)\\ &\be{i} : I^n \twoheadrightarrow I^{n-1} &&\mbox{where} \quad \be{i} = \langle\pi_1,...,\pi_{i-1},\pi_{i+1},...,\pi_n\rangle,&&(i=1,..,n),\end{align*}
where $\pi_k$ denotes the $k$-th projection map for the product.  These maps satisfy the following relations 
\begin{align} 
\al{\ve}{j}\al{\upsilon}{i} &= \al{\upsilon}{i}\al{\ve}{j-1}  \quad \quad i < j \label{aleq} \\ 
\be{j}\be{i} &= \be{i}\be{j+1}\; \quad \quad i \leq j \label{beeq}\\
\be{j}\al{\ve}{i} &= \begin{cases} \al{\ve}{i}\be{j-1} & i < j \\ 
\text{id} & i = j \\
\al{\ve}{i-1}\be{j} & i > j 
 \end{cases} \label{mixeq2}
\end{align}

The category $\mathbb{I}$ has many of the good properties of the simplicial category $\Delta$.  Every morphism of $\mathbb{I}$ can be expressed uniquely as a composite $\mu\e$ of a monomorphism $\mu$ and an epimorphism $\e$. Every epimorphism $\e : I^n \rightarrow I^m$ can be factorised uniquely as  $\be{j_1}\cdots \be{j_t}$, where $1 \leq j_1 < \cdots < j_t \leq n$. These are precisely the coordinates of the domain which are deleted. Every monomorphism $\mu : I^m \rightarrow I^n$ can be factorised uniquely as $\al{\ve_1}{i_1}\cdots \al{\ve_s}{i_s}$, where $n \geq i_1 >\cdots > i_s \geq 1$. The monomorphism $\mu$ inserts the coordinate $\ve_k$ at position $i_k$.  

\begin{defn} The unique factorisation of a morphism of $\mathbb{I}$ into a product of the form $$\al{\ve_1}{i_1}\cdots\al{\ve_s}{i_s}\be{j_1}\cdots\be{j_t}$$ as described above is called the \emph{canonical factorisation} of the morphism.
\end{defn}

The cube category $\mathbb{I}$ also has a strict monoidal structure inherited from the cartesian monoidal structure on $\Cat$, which is perhaps the main advantage over $\Delta$.

A \emph{cubical set} is a functor $X: \mathbb{I}^{op}\rightarrow \Set$.  We will write $X_{n}$ for the image of the object $I^{n}$ under the functor $X$ and call elements of this set $n$-\emph{cubes}.  Each arrow $\tau:I^{m} \rightarrow I^{n}$ in $\mathbb{I}$ gives rise to a function $X_{n} \rightarrow X_{m}$ whose value at $x \in X_{n}$ is denoted $x\tau$. An $n$-cube $x$ is \emph{degenerate} if there exists an epimorphism $\e : I^n \rightarrow I^m$ with $m < n$ and an $m$-cube $y$ such that $x = y\e$.

Write $\mathbb{I}_n$ for the full subcategory of $\mathbb{I}$ on the objects $I^1,\ldots, I^n$. The essential subtopos $\Set^{\mathbb{I}_n^{\op}}$ of the category $\Set^{\mathbb{I}^{\op}}$ of cubical sets induces a pair of adjoint functors $\sk_n \dashv \cosk_n$ on $\Set^{\mathbb{I}^{\op}}$. Concretely, the $n$-\emph{skeleton} $\sk_{n}X$ of a cubical set $X$ consists of those cubes $x \in X_{m}$ such that there exist $y \in X_k$ with $k \leq n$ and an epimorphism $ \epsilon : I^m \rightarrow I^k$ in $\mathbb{I}$ such that $x=y \epsilon$. As in the introduction, a cubical set $X$ is $n$-\emph{skeletal} if it is isomorphic to its $n$-skeleton, i.e., when each $m$-simplex with $m >n$ is degenerate.

\begin{defn} An $k$-\emph{sphere} or $k$-\emph{cycle} $c$ in $X$ is a sequence of $(k-1)$-cubes $\cc{0}{1}, \cc{1}{1}, \ldots ,\cc{0}{k},\cc{1}{k}$ satisfying the \emph{cycle equations} \begin{equation}\label{ccycleeq} \cc{\ve}{j}\al{\upsilon}{i} = \cc{\upsilon}{i}\al{\ve}{j-1} \hspace{.5cm} \text{for}\ i <j. \end{equation} 
\end{defn}

\begin{ex} Let $\cc{\ve}{i} = x \al{\ve}{i}$ for some $k$-cube $x$ in a cubical set $X$. Then $c$ is a $k$-sphere in $X$. 
\end{ex}

As in the introduction, $X$ is $n$-\emph{coskeletal} if it is isomorphic to $\cosk_n X$. Concretely, this says that for any $k$-sphere in $X$ with $k >n$ there is a unique $k$-cube $y$ such that $y \al{\ve}{i} = \cc{\ve}{i}$ for all 
$0 \leq i \leq k$ and $\ve=0,1$.  

Importantly, we have an Eilenberg-Zilber type lemma for cubes.

\begin{lem}\label{cezlem}
For each $x \in X_n$, there is a unique non-degenerate $y \in X_k$ for some $k \leq n$ together with a unique epimorphism $\e : I^n \rightarrow I^k$ such that $x = y\e$.
\end{lem}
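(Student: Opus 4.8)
The plan is to run the standard Eilenberg--Zilber argument in two halves: \emph{existence}, by choosing a factorisation $x = y\e$ through an epimorphism whose target has least possible dimension, and \emph{uniqueness}, by transporting an arbitrary such factorisation onto another along a section of one of the epimorphisms. Two structural features of $\mathbb{I}$ will be used repeatedly, both immediate from the preceding discussion: every epimorphism of $\mathbb{I}$ is split, since $\be{i}$ has $\al{0}{i}$ and $\al{1}{i}$ as sections by \eqref{mixeq2} and a composite of split epimorphisms is split; and the only monomorphism $I^k \to I^k$ is the identity (hence the only isomorphisms of $\mathbb{I}$ are identities), because a nontrivial canonical composite of elementary faces strictly raises dimension while a nontrivial canonical composite of elementary degeneracies strictly lowers it.

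For existence, fix $x \in X_n$ and consider the set of pairs $(\e, y)$ consisting of an epimorphism $\e : I^n \to I^k$ and a cube $y \in X_k$ with $x = y\e$. It is nonempty, since it contains $(\id_{I^n}, x)$, and the possible values of $k$ lie in $\{0, 1, \dots, n\}$, so we may choose such a pair with $k$ minimal. If $y$ were degenerate, say $y = z\eta$ with $\eta : I^k \to I^l$ an epimorphism and $l < k$, then $x = z(\eta\e)$ would exhibit $x$ through the epimorphism $\eta\e : I^n \to I^l$, contradicting minimality of $k$. Hence $y$ is non-degenerate.

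For uniqueness, suppose $x = y\e = y'\e'$ with $\e : I^n \to I^k$ and $\e' : I^n \to I^{k'}$ epimorphisms and $y, y'$ non-degenerate. Pick a section $s : I^k \to I^n$ of $\e$; then $y = (y\e)s = (y'\e')s = y'(\e' s)$. Factoring $\e' s$ as a monomorphism $\mu$ after an epimorphism $\eta : I^k \to I^m$ gives $y = (y'\mu)\eta$, so non-degeneracy of $y$ forces $m = k$ and $\eta = \id$; thus $\e' s = \mu$ is a monomorphism and $k \le k'$. The symmetric argument gives $k' \le k$, so $k = k'$ and $\e' s : I^k \to I^k$ is a monomorphism, hence the identity. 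Therefore $s$ is also a section of $\e'$ and $y = y'(\e' s) = y'$. It remains to see that $\e = \e'$: now $y\e = y\e'$ with $y$ non-degenerate, and for any section $s$ of $\e$ the same computation gives $y = y(\e' s)$, so the epi--mono argument above again forces $\e' s = \id$; thus every section of $\e$ is a section of $\e'$, and symmetrically conversely. Since a degeneracy operator of $\mathbb{I}$ is determined by its set of sections, $\e = \e'$.

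I expect the two epi--mono factorisation manipulations to be entirely routine. The one place that genuinely uses the combinatorics of $\mathbb{I}$ --- and the step I would flag as the crux --- is the final assertion that an epimorphism of $\mathbb{I}$ is recovered from its set of sections; this is precisely the Eilenberg--Zilber property of the Reedy category $\mathbb{I}$ noted in the Remark above. One establishes it by observing that the canonical form $\be{j_1}\cdots\be{j_t}$ deletes exactly the coordinates indexed by $j_1 < \cdots < j_t$, while its sections are exactly the monomorphisms re-inserting arbitrary constant values at those same positions; hence distinct canonical degeneracies have distinct section sets, and conversely two epimorphisms with the same sections delete the same coordinates and so coincide.
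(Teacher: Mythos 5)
Your proof is correct and follows essentially the same route as the paper: existence by choosing a factorisation through an epimorphism of minimal target dimension, and uniqueness by transporting one factorisation onto the other along a section, using non-degeneracy to force the resulting composite to be a monomorphism and hence the identity. The only cosmetic difference is at the final step: the paper invokes the slightly sharper fact that in $\mathbb{I}$ a \emph{single} section already determines its retraction (since a section of an epimorphism records exactly which coordinates were deleted and where), whereas you pass to the full set of sections, which is the form of the statement needed for $\Delta$; either version closes the argument.
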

\begin{proof} 
Existence is obvious. For uniqueness, suppose $x=y\e$ and $x=y'\e'$ satisfy these conditions, where $y \in X_k$ and $y' \in X_{k'}$. Let $\mu$ and $\mu'$ be sections for $\e$ and $\e'$ respectively.  Then
\[y= y\e\mu = x \mu = y' \e' \mu \]
Since $y$ is non-degenerate, the epimorphism portion of the canonical factorisation of $ \e' \mu$ must be trivial; thus $ \e' \mu :I^k \rightarrow I^{k'}$ is a monomorphism. By a similar argument for $\mu'$ and $\e$ we have a monomorphism $\e\mu' : I^{k'} \rightarrow I^k$. So $k=k'$, which means that these monomorphisms are both identities, and hence that $y=y'$. It follows that $\mu$ is a section for both $\e$ and $\e'$. In the cube category $\mathbb{I}$, a section uniquely determines its retraction; hence $\e=\e'$.
\end{proof}

When $x = y\e$ as in the lemma, we say that $x$ \emph{has degeneracy} $n-k$ and write $\dgn(x)=n-k$. Note, the canonical factorisation of $\e$ will have the form $\be{i_1}\cdots\be{i_{n-k}}$.

\begin{lem}\label{cubedgnlem}
Let $x$ be an $n$-cube. Then for $\ve = 0,1$ and all appropriate $i$
\begin{align*} \dgn(x\be{i}) & =  \dgn(x) +1 \\ 
\dgn(x\al{\ve}{i}) & \geq \dgn(x) -1. 
\end{align*}
\end{lem}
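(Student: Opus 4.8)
The plan is to deduce both identities from the Eilenberg--Zilber Lemma~\ref{cezlem}, together with the unique mono--epi factorisation in $\mathbb{I}$ and the mixed relations~(\ref{mixeq2}). Throughout I would fix the canonical factorisation $x = y\e$ supplied by Lemma~\ref{cezlem}, with $y \in X_k$ non-degenerate and $\e \colon I^n \twoheadrightarrow I^k$ an epimorphism, so that $\dgn(x) = n-k$.

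For the first identity, here $\be{i}$ denotes a degeneracy $I^{n+1}\twoheadrightarrow I^n$, so $x\be{i} = y(\e\be{i})$ with $\e\be{i}\colon I^{n+1}\to I^k$ a composite of epimorphisms and hence an epimorphism. Since $y$ is still non-degenerate, uniqueness in Lemma~\ref{cezlem} identifies $y(\e\be{i})$ as the canonical factorisation of $x\be{i}$, giving $\dgn(x\be{i}) = (n+1)-k = \dgn(x)+1$.

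For the second, I would write $x\al{\ve}{i} = y(\e\al{\ve}{i})$ and take the mono--epi factorisation $\e\al{\ve}{i} = \mu\e'$, with $\e'\colon I^{n-1}\twoheadrightarrow I^m$ and $\mu\colon I^m\hookrightarrow I^k$. The key point is that $\mu$ inserts \emph{at most one} coordinate, i.e.\ $\mu$ is either an identity or a single elementary face. If $\mu = \id$ then $m=k$, so $x\al{\ve}{i} = y\e'$ is itself the canonical factorisation and $\dgn(x\al{\ve}{i}) = (n-1)-k = \dgn(x)-1$. If instead $\mu = \al{\upsilon}{j}$ for some elementary face, then $m = k-1$ and $x\al{\ve}{i} = (y\al{\upsilon}{j})\e'$; replacing the $(k-1)$-cube $y\al{\upsilon}{j}$ by its own canonical factorisation $z\psi$, with $z$ non-degenerate of dimension at most $k-1$, and composing the epimorphism $\psi$ with $\e'$ yields $x\al{\ve}{i} = z(\psi\e')$ with $\psi\e'$ an epimorphism, so that $\dgn(x\al{\ve}{i}) = (n-1) - \dim z \geq (n-1)-(k-1) = \dgn(x)$. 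In either case $\dgn(x\al{\ve}{i}) \geq \dgn(x) - 1$.

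The step that is not purely formal --- and the one I expect to require the most care --- is the claim that the monomorphism part of $\e\al{\ve}{i}$ inserts at most one coordinate. I would prove this by commuting the single elementary face $\al{\ve}{i}$ leftwards past the elementary degeneracies in the canonical factorisation of $\e$, using the three cases of~(\ref{mixeq2}): each commutation either replaces the face by another elementary face or cancels it outright against a degeneracy, and once it has been cancelled there is nothing left to propagate, so at most one face ever emerges on the left. (Equivalently, one reads this off the underlying coordinate maps: $\al{\ve}{i}$ inserts one constant coordinate, which $\e$ either deletes or does not.) This is just bookkeeping in the indices of~(\ref{mixeq2}); once it is in hand, everything else follows formally from the unique factorisations of Lemma~\ref{cezlem} and the structure of $\mathbb{I}$ recalled above.
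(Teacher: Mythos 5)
Your proof is correct and fills in exactly the details behind the paper's one‑line justification (``Obvious using Lemma~\ref{cezlem}''): the first identity is immediate from uniqueness in the Eilenberg--Zilber lemma, and the second follows from the observation that commuting a single elementary face $\al{\ve}{i}$ leftward past the degeneracies in the canonical factorisation of $\e$ via~(\ref{mixeq2}) produces at most one elementary face on the left, so the monomorphism in the mono--epi factorisation of $\e\al{\ve}{i}$ drops dimension by at most one. This is the same approach the paper intends, just written out.
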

\begin{proof}
Obvious using Lemma \ref{cezlem}.
\end{proof}

The degenerate cube $x\be{i}$ has $x$ for its 0-th and 1-st faces, perpendicular to the $i$-th coordinate direction. All other faces are degenerate, even if $x$ is non-degenerate. We are interested in identifying which faces of a degenerate cube are least degenerate. Hence, the following definition.

\begin{defn}
Say that $1 \leq i \leq n$ \emph{reduces} an $n$-cube $x$ when $\dgn(x\al{\ve}{i}) = \dgn(x)-1$ for some $\ve$. 
\end{defn}

\begin{rmk}
Note if $x$ is reduced by $i$ then $$x\al{0}{i} = (x\al{1}{i}\be{i})\al{0}{i} = x\al{1}{i}(\be{i}\al{0}{i}) = x\al{1}{i}.$$
\end{rmk}

There are several equivalent characterisations of this condition, as indicated by the following lemma, whose proof is an easy exercise.

\begin{lem}\label{ctfaelem} The following are equivalent: \\ \indent \emph{(i)} $i$ reduces $x$. \\ \indent \emph{(ii)} $\dgn(x\al{0}{i}) = \dgn(x\al{1}{i})=\dgn(x)-1$. \\ \indent \emph{(iii)} the epimorphism of the Eilenberg-Zilber decomposition of $x$ deletes the $i$-th \\ \indent \qquad coordinate.\\ \indent \emph{(iv)} $\be{i}$ appears in the canonical factorisation of the epimorphism in the Eilenberg-\indent \qquad Zilber decomposition of $x$. \\ \indent \emph{(v)}  $x = x \al{\ve}{i}\be{i}$ for some $\ve$. \\ \indent \emph{(vi)} $x = x\al{0}{i}\be{i} = x\al{1}{i}\be{i}$.
\end{lem}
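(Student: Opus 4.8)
The plan is to prove the equivalence of (i)--(vi) by establishing a cycle of implications, exploiting Lemma~\ref{cezlem} and the relations \eqref{aleq}--\eqref{mixeq2} throughout. The pivotal observation is that the Eilenberg--Zilber decomposition $x = y\e$ with $y$ non-degenerate is canonical, so that statements about $\dgn(x\al{\ve}{i})$ translate directly into statements about whether $\be{i}$ occurs among the $\be{j_1},\dots,\be{j_{n-k}}$ comprising the canonical factorisation of $\e$. Concretely, I would first show (iii) $\Leftrightarrow$ (iv), which is immediate from the definition of the canonical factorisation of an epimorphism (the $j$'s are exactly the deleted coordinates). Then I would connect these to the degeneracy-drop conditions: if $\be{i}$ appears in the factorisation of $\e$, then writing $\e = \e' \be{i} \e''$ and using relation \eqref{mixeq2} to commute $\al{\ve}{i}$ past the factors of $\e''$ shows $\e \al{\ve}{i}$ is again an epimorphism onto $I^k$ precomposed appropriately, whence $\dgn(x\al{\ve}{i}) = n-1-k = \dgn(x)-1$; this gives (iv) $\Rightarrow$ (ii), and (ii) $\Rightarrow$ (i) is trivial.

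For the converse direction, the key is the monomorphism--epimorphism factorisation together with Lemma~\ref{cezlem}. Suppose $i$ reduces $x$, so $\dgn(x\al{\ve}{i}) = \dgn(x) - 1$ for some $\ve$. Writing $x = y\e$ with $y$ non-degenerate, I would factor $\e\al{\ve}{i}$ canonically and use relation \eqref{mixeq2} to analyse the result: the drop in degeneracy forces the monomorphism part of $\e\al{\ve}{i}$ to be trivial in the relevant coordinate, which by the uniqueness clause of the factorisation forces $\be{i}$ to appear in $\e$, giving (i) $\Rightarrow$ (iv). This closes the loop (i) $\Rightarrow$ (iv) $\Rightarrow$ (ii) $\Rightarrow$ (i), and folds (iii) in via (iii) $\Leftrightarrow$ (iv).

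It remains to splice in conditions (v) and (vi), which describe the reduction phenomenon at the level of $X$ rather than at the level of $\mathbb{I}$. For (iv) $\Rightarrow$ (vi): if $\be{i}$ occurs in $\e$, then by relation \eqref{beeq} we may arrange the canonical factorisation so that $\e = \be{i}\cdot(\text{rest})$ after a permutation, or more carefully use \eqref{mixeq2} to see that $\al{\ve}{i}\be{i}$ acts as the identity on the coordinate deleted by that factor; either way $\e = (\e\al{\ve}{i}\be{i})$ for both $\ve = 0,1$, whence $x = x\al{\ve}{i}\be{i}$ for both values of $\ve$. Then (vi) $\Rightarrow$ (v) is trivial, and for (v) $\Rightarrow$ (i) I would note that $x = x\al{\ve}{i}\be{i}$ together with Lemma~\ref{cubedgnlem} forces $\dgn(x) = \dgn(x\al{\ve}{i}\be{i}) = \dgn(x\al{\ve}{i}) + 1 \leq (\dgn(x) - 1) + 1 + 1$, and the sharper bookkeeping (using that $\be{i}$ strictly raises degeneracy and $\al{\ve}{i}$ lowers it by at most one) pins $\dgn(x\al{\ve}{i}) = \dgn(x) - 1$, which is exactly (i).

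The calculations are all routine manipulations with the cubical identities, so I do not expect a genuine obstacle; the one point requiring a little care is the passage (i) $\Rightarrow$ (iv), where one must be sure that commuting a single $\al{\ve}{i}$ past the string $\be{j_1}\cdots\be{j_{n-k}}$ via \eqref{mixeq2} really does leave $\be{i}$ itself as the unique possible ``cancellation,'' rather than producing a trivially-factored epimorphism through some other coordinate collision. This is handled by the uniqueness in Lemma~\ref{cezlem}: the non-degenerate $y$ and the epimorphism onto $I^k$ are both uniquely determined, so once we know a degeneracy-lowering section exists through coordinate $i$, it must be the one recorded in the canonical factorisation of $\e$. Given the paper's own assessment that this is ``an easy exercise,'' a terse writeup following the cycle above should suffice.
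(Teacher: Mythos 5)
The paper leaves this lemma as ``an easy exercise,'' so there is no in-paper proof to compare against; judged on its own terms, your cycle of implications is correct and matches what the authors clearly have in mind, with the Eilenberg--Zilber decomposition of Lemma~\ref{cezlem}, the canonical factorisation of epimorphisms, and the cubical identity~(\ref{mixeq2}) doing all the work. The linchpin observation---that for $x = y\e$ with $y$ non-degenerate, $\e\al{\ve}{i}$ is again epimorphic precisely when $\be{i}$ appears in the canonical factorisation of $\e$, and that then $\e\al{\ve}{i}\be{i}=\e$---is exactly right and carries (iii) $\Leftrightarrow$ (iv) $\Rightarrow$ (ii)/(vi) cleanly; the uniqueness clause of Lemma~\ref{cezlem} is indeed what forces (i) $\Rightarrow$ (iv).

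One small slip worth correcting in the writeup: in (v) $\Rightarrow$ (i) you write the chain
\[
\dgn(x) = \dgn(x\al{\ve}{i}\be{i}) = \dgn(x\al{\ve}{i}) + 1 \leq (\dgn(x) - 1) + 1 + 1,
\]
which is both pointing the wrong direction (Lemma~\ref{cubedgnlem} gives $\dgn(x\al{\ve}{i}) \geq \dgn(x)-1$, hence a lower bound, not an upper bound) and has a stray ``$+1$''. No inequality is needed at all: $x = x\al{\ve}{i}\be{i}$ and Lemma~\ref{cubedgnlem} give the equality $\dgn(x) = \dgn(x\al{\ve}{i}) + 1$ outright, which is (i). Your own parenthetical (``$\be{i}$ strictly raises degeneracy\dots pins $\dgn(x\al{\ve}{i}) = \dgn(x)-1$'') already says the correct thing, so just delete the erroneous inequality. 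With that cleanup the argument is complete.
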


Note the following obvious but useful consequence of these equivalent characterisations.

\begin{lem}\label{tricklem}
Suppose $x$ and $y$ are $n$-cubes which are both reduced by $i$. If $x\al{\ve}{i}=y\al{\upsilon}{i}$ for some $\ve$ and $\upsilon$ then $x=y$.
\end{lem}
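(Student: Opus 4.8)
The plan is to deduce Lemma~\ref{tricklem} directly from characterisation (vi) of Lemma~\ref{ctfaelem}. The guiding idea is that a cube reduced by $i$ can be reconstructed from any one of its $i$-th faces simply by applying the degeneracy $\be{i}$; since the hypothesis identifies an $i$-th face of $x$ with an $i$-th face of $y$, applying $\be{i}$ should recover $x$ on one side and $y$ on the other.

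Concretely, I would argue as follows. Since $x$ is reduced by $i$, part (vi) of Lemma~\ref{ctfaelem} gives $x = x\al{0}{i}\be{i} = x\al{1}{i}\be{i}$; in particular $x = x\al{\ve}{i}\be{i}$ for the value of $\ve$ occurring in the hypothesis. Likewise, since $y$ is reduced by $i$, we have $y = y\al{\upsilon}{i}\be{i}$. Applying the function $\be{i}$ to the assumed equality $x\al{\ve}{i} = y\al{\upsilon}{i}$ then yields
\[ x = x\al{\ve}{i}\be{i} = y\al{\upsilon}{i}\be{i} = y, \]
which is the claim.

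There is essentially no obstacle here: the statement is a one-line formal consequence of the equivalent characterisations already established in Lemma~\ref{ctfaelem}, and the only point requiring care is to invoke the form of (vi) that holds for \emph{both} choices of sign, so that it applies to whichever $\ve$ and $\upsilon$ are given. An equivalent route, closer in spirit to the remark preceding Lemma~\ref{ctfaelem}, is to first observe that being reduced by $i$ forces $x\al{0}{i} = x\al{1}{i}$ and $y\al{0}{i} = y\al{1}{i}$ (using relation~\eqref{mixeq2} at $i=j$ together with characterisation (v)), thereby reducing to the case $\ve = \upsilon = 0$ before applying $\be{i}$; this is not logically necessary but makes transparent why the choice of signs is irrelevant.
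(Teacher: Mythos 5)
Your proof is correct and is exactly the argument the paper intends: the paper presents Lemma~\ref{tricklem} as an ``obvious but useful consequence'' of Lemma~\ref{ctfaelem}, and your use of characterisation (vi) — writing $x = x\al{\ve}{i}\be{i}$ and $y = y\al{\upsilon}{i}\be{i}$ and then applying $\be{i}$ to the hypothesised equality — is the natural way to make that precise.
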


The main technical tool in the computation of the Aufhebung relation for cubical sets is the following proposition, which we will use to show that spheres consisting of highly degenerate cubes can be filled by a cleverly chosen degenerate copy of one of the least degenerate constituent faces.

\begin{prop}\label{techcubeprop}
Let $X$ be a cubical set which is $n$-skeletal and let $c$ be a $k$-sphere with faces $\cc{0}{1},\cc{1}{1},\ldots, \cc{0}{k},\cc{1}{k}$, all degenerate. Let $r$ be the minimal degeneracy of the faces $\cc{\ve}{i}$ and let $m$ be the smallest ordinal with $\dgn(\cc{\ve}{m})=r$ for some $\ve$. If $k<2r+2$ then this sphere is filled by $\cc{\ve}{m}\be{m}$.
\end{prop}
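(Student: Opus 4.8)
The plan is to produce the filler explicitly and check its faces by hand. Fix $\ve\in\{0,1\}$ with $\dgn(\cc{\ve}{m})=r$ and set $y=\cc{\ve}{m}\be{m}$, a $k$-cube; I will verify that $y\al{\upsilon}{i}=\cc{\upsilon}{i}$ for every $i\in\{1,\dots,k\}$ and $\upsilon\in\{0,1\}$. Expanding $\be{m}\al{\upsilon}{i}$ via \eqref{mixeq2} and then applying a cycle equation \eqref{ccycleeq}, one computes that $y\al{\upsilon}{i}=\cc{\upsilon}{i}\al{\ve}{m-1}\be{m-1}$ for $i<m$, that $y\al{\upsilon}{m}=\cc{\ve}{m}$, and that $y\al{\upsilon}{i}=\cc{\upsilon}{i}\al{\ve}{m}\be{m}$ for $i>m$. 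By Lemma \ref{ctfaelem}(vi) the first and third expressions reduce to $\cc{\upsilon}{i}$, and the middle one is what we need, exactly when the following hold: (A) $\cc{0}{m}=\cc{1}{m}$; (B) $m-1$ reduces $\cc{\upsilon}{i}$ whenever $i<m$; and (C) $m$ reduces $\cc{\upsilon}{i}$ whenever $i>m$. Thus the proposition reduces to (A)--(C).

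To prove these I would bring in the numerical hypotheses. Minimality of $r$ gives $\dgn(\cc{\upsilon}{i})\ge r$ for all $i,\upsilon$, and minimality of $m$ sharpens this to $\dgn(\cc{\upsilon}{i})\ge r+1$ for $i<m$. A first consequence: no index $i<m$ reduces $\cc{\ve}{m}$, for otherwise $\dgn(\cc{\ve}{m}\al{\upsilon}{i})=r-1$ by Lemma \ref{ctfaelem}, whereas the cycle equation $\cc{\ve}{m}\al{\upsilon}{i}=\cc{\upsilon}{i}\al{\ve}{m-1}$ together with Lemma \ref{cubedgnlem} forces $\dgn(\cc{\ve}{m}\al{\upsilon}{i})\ge\dgn(\cc{\upsilon}{i})-1\ge r$. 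Hence the $r$ coordinates deleted by the Eilenberg--Zilber epimorphism of $\cc{\ve}{m}$ (Lemma \ref{cezlem}) all lie in $\{m,\dots,k-1\}$, so $r\le k-m$ and therefore $m\le k-r\le r+1$. Now the hypothesis $k<2r+2$, i.e.\ $k-1\le 2r$, says precisely that the $(k-1)$-cube $\cc{\ve}{m}$ is reduced by at least half of its coordinates; propagating the same degeneracy bookkeeping through the cycle equations relating $\cc{\ve}{m}$ to its neighbouring faces should leave those faces --- whose degeneracies are all at least $r$ --- no room to be reduced away from the coordinates $m-1$ and $m$ that the index shifts in \eqref{mixeq2} single out, which gives (B) and (C) as well as the fact that $m$ reduces each of $\cc{0}{m}$ and $\cc{1}{m}$.

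Granting that last fact and (C), assertion (A) is then immediate: since $m+1\le k$ (as $r\ge1$ and $r\le k-m$), two applications of \eqref{ccycleeq} give $\cc{0}{m}\al{0}{m}=\cc{0}{m+1}\al{0}{m}$ and $\cc{1}{m}\al{0}{m}=\cc{0}{m+1}\al{1}{m}$, while $\cc{0}{m+1}\al{0}{m}=\cc{0}{m+1}\al{1}{m}$ because $m$ reduces $\cc{0}{m+1}$; combining, $\cc{0}{m}\al{0}{m}=\cc{1}{m}\al{0}{m}$, and as $m$ reduces both $\cc{0}{m}$ and $\cc{1}{m}$, Lemma \ref{tricklem} yields $\cc{0}{m}=\cc{1}{m}$.

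The main obstacle is the argument sketched in the second paragraph. A naive degeneracy count handles for free only those neighbouring faces $\cc{\upsilon}{i}$ whose relevant coordinate ($m-1$ or $m$) already happens to be a reducing coordinate of $\cc{\ve}{m}$; excluding the remaining possibilities --- that some $\cc{\upsilon}{i}$ is degenerate yet reduced only by coordinates other than $m-1$ and $m$ --- is exactly the point at which $k<2r+2$ must be used to its full strength, presumably by constraining the reducing-coordinate sets of all the constituent faces at once rather than one face at a time.
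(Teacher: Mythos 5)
Your reduction of the proposition to the three claims (A), (B), (C) is correct and is a clean way of organizing the statement: once $y=\cc{\ve}{m}\be{m}$ is expanded via the cubical identity and the cycle equations, the filler property is exactly (B) in positions $i<m$, (A) in position $i=m$, and (C) in positions $i>m$. Your observation that all $r$ reducing indices of $\cc{\ve}{m}$ lie in $\{m,\dots,k-1\}$ (so $r\le k-m$) is also correct and matches the first step of the paper's argument. But the proposal stops short of a proof in two places.

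The main gap is exactly the one you flag in your last paragraph: (B) and (C) are asserted to follow from ``propagating the degeneracy bookkeeping,'' but no argument is given, and the naive count you yourself describe does not do it. The paper does \emph{not} prove (B) and (C) head-on. Writing $M$ for the reducing set of $\cc{\ve}{m}$, it first shows directly (Part~I) that $\cc{\upsilon}{j+1}=c_m\be{m}\al{\upsilon}{j+1}$ for every $j\in M$. Then for each remaining $u$ it exhibits a specific $(r+1)$-element subset of $\{1,\dots,k-1\}$ (namely $\{m-1\}\cup M$ when $u<m$, and $K=\{m\}\cup\{j+1:j\in M,\,j+1<u\}\cup\{j:j\in M,\,j+1>u\}$ when $u>m$, $u-1\notin M$) that, together with $k-1<2r+1$, must meet the reducing set of $\cc{\upsilon}{u}$; the chosen $p$ need \emph{not} be $m-1$ or $m$. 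The desired identity is then derived by passing through $\cc{\upsilon}{p}$ or $\cc{\upsilon}{p+1}$ and substituting the Part~I formula, using the cubical identities to move the degeneracies back into place. In other words, the inequality $k<2r+2$ is not used to force the coordinate $m-1$ or $m$ itself to be reducing; it is used to force \emph{some} coordinate in a carefully chosen set to be reducing, after which (B) and (C) come out as a consequence of the computed formula $\cc{\upsilon}{u}=c_m\be{m}\al{\upsilon}{u}$, not as an input to it. Your hunch in the final paragraph is sound, but this bootstrapping mechanism is the missing idea, and without it the argument does not close.

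There is a second, smaller, error: your proof of (A) assumes that $m$ reduces both $\cc{0}{m}$ and $\cc{1}{m}$, but this need not hold --- $m$ need not belong to the reducing set $M$ of $\cc{\ve}{m}$ at all (the analogous possibility is handled explicitly in the simplicial case in Lemma~\ref{techlem1}). Your application of Lemma~\ref{tricklem} therefore fails as written. The fix is to use a $j\in M$ in place of $m$: the cycle equations and the fact that $m$ reduces $\cc{\upsilon}{j+1}$ give $\cc{0}{m}\al{\upsilon}{j}=\cc{1}{m}\al{\upsilon}{j}$, and $j$ is known to reduce $\cc{\ve}{m}$ and can be shown to reduce $\cc{1-\ve}{m}$ as well (since $\dgn(\cc{1-\ve}{m}\al{\upsilon}{j})=r-1$ forces $\dgn(\cc{1-\ve}{m})=r$), so Lemma~\ref{tricklem} applies. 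This is how the paper proves $\cc{0}{m}=\cc{1}{m}$.
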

\begin{proof}
Suppose $\dgn(\cc{\ve}{m})=r$ with $m$ minimal. We will make repeated use of the set $M = \{ j_1,\ldots, j_r\}$ of ordinals that reduce $\cc{\ve}{m}$; i.e., write $M$ for the set of ordinals which appear in the canonical factorisation $\be{j_1}\cdots\be{j_r}$ of the epimorphism part of the Eilenberg-Zilber decomposition of $\cc{\ve}{m}$.

By a dimension argument, $M \subset \{1,\ldots, k-1\}$. In fact, because we chose $m$ to be minimal, each $j \in M$ is such that $j \geq m$: if $j<m$ reduces $\cc{\ve}{m}$ then by the cycle equations,
\[
r-1=\dgn(\cc{\ve}{m}\al{\upsilon}{j})=\dgn(\cc{\upsilon}{j}\al{\ve}{m})\] which means $\dgn(\cc{\upsilon}{j})=r$, contradicting our choice of $m$.

First, we show that $\cc{0}{m} = \cc{1}{m}$. For any $j \in M$ and some fixed $\upsilon$, \begin{equation}\label{jinMeq}r-1 = \dgn ( \cc{\ve}{m} \al{\upsilon}{j} ) =\dgn( \cc{\upsilon}{j+1} \al{\ve}{m})\end{equation} by the cycle equations. Because $r$ is the minimal degeneracy of the faces of $c$, this says that $m$ reduces $\cc{\upsilon}{j+1}$ which means that $\cc{\upsilon}{j+1} \al{0}{m} = \cc{\upsilon}{j+1}\al{1}{m}$ by Lemma \ref{ctfaelem}. By applying the cycle equations to both sides of this equality, we see that the $\al{\upsilon}{j}$ faces of $\cc{0}{m}$ and $\cc{1}{m}$ are equal and Lemma \ref{tricklem} implies that $\cc{0}{m} = \cc{1}{m}$. 

Henceforth, we write $c_m$ for $\cc{0}{m}=\cc{1}{m}$. We wish to show that $\cc{\ve}{u} = c_m\be{m}\al{\ve}{u}$ for all faces of $c$. Immediately from (\ref{mixeq2}), $\cc{\ve}{m} = c_m\be{m}\al{\ve}{m}$ for $\ve=0,1$. We complete the proof by dividing the remaining faces into three cases.

Part I: ($\cc{\ve}{j+1} = c_m\be{m}\al{\ve}{j+1}$ for $j \in M$). By (\ref{jinMeq}), $m$ reduces $\cc{\ve}{j+1}$ when $j \in M$.  Hence, 
\[ \cc{\ve}{j+1} = \cc{\ve}{j+1}\al{\ve}{m}\be{m} = c_m\al{\ve}{j} \be{m} = c_m\be{m} \al{\ve}{j+1}\] by (\ref{ccycleeq}) and then (\ref{mixeq2}), recalling that $j\geq m$. This is what we wanted to show.

Part II: ($\cc{\ve}{u} = c_m\be{m}\al{\ve}{u}$ for all $u<m$). If $u<m$ then $\cc{\ve}{u}$ must be reduced by at least $r+1$ ordinals in $\{1,\ldots,k-1\}$, by minimality of $m$.  If $k-1 < 2r+2$ then at least one of these lies in the $r+1$ element set $ \{m-1 \} \cup M$.  Call this
element $p$.  Then we have
\begin{align*}
\cc{\ve}{u} & =  \cc{\ve}{u}\al{\upsilon}{p}\be{p} && \mbox{$p$ reduces $ \cc{\ve}{u}$} \\
& =  \cc{\upsilon}{p+1}\al{\ve}{u}\be{p} && \mbox{cycle equation ($u<p+1$)}  \\
& =  \cc{\upsilon}{p+1}\be{p+1}\al{\ve}{u}&& \mbox{cubical identity.} \\ \intertext{If $p=m-1$ this is exactly what we want. Otherwise, $p \in M$ and $\cc{\upsilon}{p+1} =c_m\be{m}\al{\upsilon}{p+1}$ by Part I. By substitution}
\cc{\ve}{u} 
 & =  c_m\be{m}\al{\upsilon}{p+1}\be{p+1}\al{\ve}{u} \\
 & =  c_m\al{\upsilon}{p}\be{m}\be{p+1}\al{\ve}{u} && \mbox{cubical identity ($m<p+1$)} \\
 & =  c_m\al{\upsilon}{p}\be{p}\be{m}\al{\ve}{u} && 
\mbox{cubical identity} \\
 & =  c_m\be{m}\al{\ve}{u}  && \mbox{$p$ reduces $c_m$.} 
\end{align*}
This is what we wanted to show.

Part III: ($\cc{\ve}{u} = c_m\be{m}\al{\ve}{u}$ for $u>m$ and $u-1 \notin M$). Let $$K = \{ m\} \cup \{ j+1 \mid j \in M, j+1< u \} \cup \{ j \mid j \in M, j+1 > u \}.$$ Because $u-1 \notin M$ and $u>m$, $K$ has $r+1$ elements. Tautologically, $K\subseteq\{1,2,...,k-1\}$. Because $\cc{\ve}{u}$ is reduced by at least $r$ elements, if $k-1<2r+1$, there is a $p\in K$ that reduces $\cc{\ve}{u}$.

Case 1: ($p<u$ so either $p=m$ or $p-1 \in M$).
If $p=m$ then \begin{align*} \cc{\ve}{u} &= \cc{\ve}{u}\al{\ve}{m}\be{m} && \mbox{$m$ reduces $\cc{\ve}{u}$} \\
           & = \cc{\ve}{m}\al{\ve}{u-1}\be{m} &&
\mbox{cycle equation ($m<u$)}  \\
           & = \cc{\ve}{m}\be{m}\al{\ve}{u} &&
\mbox{cubical identity}\\ \intertext{as desired. If $p-1 \in M$, then}
\cc{\ve}{u} &= \cc{\ve}{u} \al{\ve}{p}\be{p} && 
\mbox{$p$ reduces $\cc{\ve}{u}$} \\
 & =  \cc{\ve}{p}\al{\ve}{u-1}\be{p} &&
\mbox{cycle equation ($p<u$)}  \\
 & =  \cc{\ve}{p}\be{p}\al{\ve}{u} &&
\mbox{cubical identity} \\
             & = c_{m}\be{m}\al{\ve}{p}
\be{p}\al{\ve}{u} && \mbox{Part I}\\
 & = c_m\al{\ve}{p-1}\be{p-1}
\be{m}\al{\ve}{u} && \mbox{(\ref{mixeq2}) then (\ref{beeq}) ($m <p$)} \\
             & = c_m\be{m}\al{\ve}{u} &&
\mbox{${p-1}$ reduces $c_m$}
\end{align*}
which is the desired conclusion.

Case 2: ($p\geq u$ and hence $p \in M$).
\begin{align*}
\cc{\ve}{u} & =  \cc{\ve}{u}\al{\ve}{p}\be{p} &&
\mbox{$p$ reduces $\cc{\ve}{u}$} \\
		 & =  \cc{\ve}{p+1}\al{\ve}{u}\be{p} && \mbox{cycle equation $(u \leq p)$} \\
                 & =  \cc{\ve}{p+1}\be{p+1}\al{\ve}{u} && 
\mbox{cubical identity} \\
             & =  c_m\be{m}\al{\ve}{p+1}
\be{p+1}\al{\ve}{u} && \mbox{Part I}	 \\
        	& = c_m\al{\ve}{p}\be{p}\be{m}\al{\ve}{u} && \mbox{(\ref{mixeq2}) then (\ref{beeq}) ($m \leq p$)}
 \\    & =  c_m\be{m}\al{\ve}{u} && \mbox{$p$ reduces $c_m$} 
\end{align*}

Combining these cases, we have shown that if $k < 2r+2$ then $\cc{\ve}{u} = c_m\be{m}\al{\ve}{u}$ for all $u=1,\ldots,k$ and $\ve=0,1$. Hence, $c_m\be{m}$ is a filler for the $k$-sphere $c$.
\end{proof}

In order to use Proposition \ref{techcubeprop} to prove that $n$-skeletal implies $2n$-coskeletal, we must also show that the filler it constructs for high dimensional spheres is unique. This follows from the following lemma, which states that degenerate cubes are uniquely determined by their boundaries.

\begin{lem}\label{cubeuniquelem}
If $x$ and $y$ are two degenerate $k$-cubes in $X$ with the same faces, i.e., if 
 $x\al{\ve}{i} = y\al{\ve}{i}$ for all $1\leq i\leq k$ and 
$\ve=0,1$,  then $x=y$.
\end{lem}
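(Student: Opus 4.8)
The plan is to use the Eilenberg--Zilber decomposition (Lemma~\ref{cezlem}) together with the reduction calculus of Lemma~\ref{ctfaelem}. Since $x$ is degenerate, the epimorphism in its Eilenberg--Zilber decomposition is nontrivial, so by Lemma~\ref{ctfaelem} there is an index $i$ that reduces $x$; consequently $x = x\al{0}{i}\be{i} = x\al{1}{i}\be{i}$, and in particular $x\al{0}{i} = x\al{1}{i}$. Because $x$ and $y$ have the same faces, $x\al{0}{i} = y\al{0}{i}$, and substituting into the first identity gives $x = y\al{0}{i}\be{i}$.

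The first main step is to deduce that $\dgn(x) = \dgn(y)$. From $x = y\al{0}{i}\be{i}$ and Lemma~\ref{cubedgnlem} one obtains $\dgn(x) = \dgn(y\al{0}{i}) + 1 \geq (\dgn(y) - 1) + 1 = \dgn(y)$; running the symmetric argument with the roles of $x$ and $y$ reversed (using an index $j$ that reduces $y$) yields the reverse inequality, so $\dgn(x) = \dgn(y) =: d$.

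The second main step is to show that the same index $i$ which reduces $x$ also reduces $y$, after which the claim is immediate: $y = y\al{0}{i}\be{i} = x$ by Lemma~\ref{ctfaelem} and the displayed identity above. To see that $i$ reduces $y$: from $x = y\al{0}{i}\be{i}$ and $\dgn(x) = d$ together with Lemma~\ref{cubedgnlem} we get $\dgn(y\al{0}{i}) = d - 1 = \dgn(y) - 1$; and since $y\al{1}{i} = x\al{1}{i} = x\al{0}{i} = y\al{0}{i}$, also $\dgn(y\al{1}{i}) = \dgn(y) - 1$. By the characterisation of Lemma~\ref{ctfaelem}(ii), $i$ reduces $y$, completing the argument.

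The computation is entirely routine bookkeeping with the cubical identities; the one place that needs care is the order of the two steps — one must first pin down the equality $\dgn(x) = \dgn(y)$ before concluding that a reducing index for $x$ is a reducing index for $y$, since a priori the cube $y\al{0}{i}\be{i}$ need not simplify back to $y$.
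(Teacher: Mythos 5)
Your proof is correct, and it takes a genuinely different route from the paper's. The paper picks a reducing index $i$ for $x$ and a reducing index $j$ for $y$, dispenses with the case $i=j$ via Lemma~\ref{tricklem}, and then for $i<j$ runs a chain of equalities exploiting the commutation of the idempotents $\al{\ve}{i}\be{i}$ and $\al{\ve}{j}\be{j}$ to show that $j$ in fact reduces $x$ as well; the conclusion then comes from Lemma~\ref{tricklem} again. You instead pick only one reducing index $i$ for $x$ and transfer it to $y$ by tracking the degeneracy invariant: first establishing $\dgn(x)=\dgn(y)$ from the monotonicity inequalities of Lemma~\ref{cubedgnlem} applied in both directions, then reading off $\dgn(y\al{0}{i})=\dgn(y)-1$, so $i$ reduces $y$. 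This eliminates the $i<j$ case analysis and the algebraic juggling with the commutation relation \eqref{beeq}, at the cost of the intermediate step $\dgn(x)=\dgn(y)$. Both arguments are about the same length; yours is somewhat more conceptual in that it reasons about the invariant $\dgn$ rather than rearranging face/degeneracy operators, while the paper's stays closer to the raw cubical identities. Your closing observation about the order of the two steps is exactly the subtlety that makes the degeneracy-counting route nontrivial, and you have handled it correctly.
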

\begin{proof} Because both $x$ and $y$ are degenerate there is some $i$ that reduces $x$ and some $j$ that reduces $y$. If $i=j$ we are done by Lemma \ref{tricklem}, so we suppose without loss of generality that $i <j$. Then 
\[ x = x\al{\ve}{i}\be{i} = y\al{\ve}{i}\be{i} = y\al{\ve}{j}\be{j}\al{\ve}{i}\be{i} = y \al{\ve}{i}\be{i}\al{\ve}{j}\be{j} = x\al{\ve}{i}\be{i}\al{\ve}{j}\be{j} = x\al{\ve}{j}\be{j}\] using the hypothesis that $x$ and $y$ share the same faces, the definition of $i$ and $j$, and the cubical identities. This says that $j$ reduces $x$ as well as $y$ and the conclusion follows by Lemma \ref{tricklem}.
\end{proof}

It is easy to check that a $0$-skeletal cubical set is $1$-coskeletal. For larger $n$, we use the preceding work to prove our main result.

\begin{thm}\label{rcubeupperboundthm}
If a cubical set is $n$-skeletal, it is $2n$-coskeletal. Hence,
the Aufhebung relation for the topos of cubical sets is bounded above by $2n$. 
\end{thm}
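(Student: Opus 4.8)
The plan is to verify the concrete criterion recorded above for $2n$-coskeletality: that every $k$-sphere $c$ in an $n$-skeletal cubical set $X$ with $k>2n$ admits a unique filler. I would dispatch the degenerate case $n=0$ separately (it only yields $1$-coskeletality, as already noted above), so assume $n\ge 1$ throughout.

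For existence, the idea is a dimension count feeding into Proposition~\ref{techcubeprop}. Every face $\cc{\ve}{i}$ of $c$ is a $(k-1)$-cube, and $k-1\ge 2n>n$, so by $n$-skeletality each face is degenerate; moreover the non-degenerate Eilenberg--Zilber core (Lemma~\ref{cezlem}) of any cube of $X$ has dimension at most $n$, since a core of larger dimension would itself be degenerate. Hence $\dgn(\cc{\ve}{i})\ge (k-1)-n$ for every face, so if $r$ denotes the minimal degeneracy of the faces of $c$ then $r\ge k-1-n$, and therefore
\[ 2r+2\ \ge\ 2(k-1-n)+2\ =\ 2k-2n\ >\ k, \]
the last step being precisely the hypothesis $k>2n$. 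Thus the inequality $k<2r+2$ demanded by Proposition~\ref{techcubeprop} holds, and the proposition produces a filler: writing $m$ for the least index with $\dgn(\cc{\ve}{m})=r$, the cube $\cc{\ve}{m}\be{m}$ fills $c$ (and this is unambiguous, since the proof of the proposition first establishes $\cc{0}{m}=\cc{1}{m}$).

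For uniqueness, I would observe that any filler $y$ of $c$ is a $k$-cube with $k>2n\ge n+1$, hence degenerate by $n$-skeletality, while the filler $\cc{\ve}{m}\be{m}$ constructed above is degenerate as well. Two degenerate $k$-cubes with the same faces --- here both have exactly the faces of $c$ --- are equal by Lemma~\ref{cubeuniquelem}, so $y=\cc{\ve}{m}\be{m}$. Having unique fillers for all spheres of dimension exceeding $2n$ is exactly the assertion that $X\cong\cosk_{2n}X$; since $X$ was an arbitrary $n$-skeletal cubical set, the Aufhebung of level $n$ is bounded above by $2n$.

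I do not anticipate a real obstacle, since the substantive combinatorics has been isolated in Proposition~\ref{techcubeprop} and Lemma~\ref{cubeuniquelem}. The only point requiring care is the bookkeeping in the existence step: that $n$-skeletality forces $r\ge k-1-n$, and that this lower bound triggers Proposition~\ref{techcubeprop} exactly in the range $k>2n$ --- this is what pins the bound at $2n$ and not at anything smaller. Uniqueness is then automatic once one notices that in an $n$-skeletal cubical set every cube of dimension greater than $n$, in particular every candidate filler of a high-dimensional sphere, is necessarily degenerate.
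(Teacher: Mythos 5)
Your proposal is correct and takes essentially the same approach as the paper: reducing existence to Proposition~\ref{techcubeprop} via the inequality $k>2n \Leftrightarrow k<2(k-1-n)+2$ combined with the observation that $n$-skeletality forces $\dgn(\cc{\ve}{i})\ge k-1-n$ for every face, and invoking Lemma~\ref{cubeuniquelem} for uniqueness. You are slightly more explicit than the paper in noting that any candidate filler is itself degenerate (since it lives in dimension $k>n$), which is the hypothesis Lemma~\ref{cubeuniquelem} actually requires, but the substance and route are identical.
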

\begin{proof} We must show that any $k$-sphere in an $n$-skeletal cubical set $X$ with $k> 2n$ can be filled uniquely. The inequality $k > 2n$ can be rewritten as $k < 2(k-1-n)+2$. The faces of a $k$-sphere are $(k-1)$-cubes, which therefore have degeneracy at least $k-1-n$.  Applying Proposition \ref{techcubeprop}, any $k$-sphere has a filler. By Lemma \ref{cubeuniquelem}, it's unique.
\end{proof}
 
\begin{ex} Let $X$ be the $n$-skeletal cubical set generated by a single vertex $v$ and two $n$-cubes $x$ and $y$, with each face equal to the $(n-1)$-cube $v\be{1}\cdots\be{n-1}$. We define a $2n$-sphere with faces $$\cc{\ve}{i} = \begin{cases} x\be{1}\cdots\be{n-1} & 1 \leq i \leq n \\ y\be{n+1}\cdots\be{2n-1} & n < i \leq 2n. \end{cases}$$ No cube of $X$ contains both $x$ and $y$ as faces, so this sphere has no filler.\end{ex}

\begin{thm}\label{rcubesetthm}
The Aufhebung relation for the topos of cubical sets is $2n$.
\end{thm}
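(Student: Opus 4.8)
The plan is to combine the upper bound already established in Theorem~\ref{rcubeupperboundthm} with the explicit example immediately preceding this statement; together these pin down the Aufhebung exactly, and no further combinatorics is needed. Recall that the Aufhebung of the level $n$ is by definition the smallest level $k$ for which $n$-skeletal implies $k$-coskeletal. So I must check two things: (i) that $n$-skeletal implies $2n$-coskeletal, and (ii) that $n$-skeletal does not imply $k$-coskeletal for any level $k<2n$.

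Part (i) is precisely Theorem~\ref{rcubeupperboundthm}, so there is nothing to add there. For part (ii) I would first record the monotonicity of coskeletality: since, as explained in the introduction, the levels of $\Set^{\mathbb{I}^{\op}}$ are linearly ordered by dimension and a lower coskeletal inclusion factors through a higher one, every $j$-coskeletal cubical set is automatically $k$-coskeletal whenever $j\le k$. Consequently, to exclude all levels strictly below $2n$ at once it suffices to produce, for each $n\ge 1$, a single $n$-skeletal cubical set that fails to be $(2n-1)$-coskeletal. This is exactly what the example above provides: an explicit $n$-skeletal $X$ carrying a $2n$-sphere with no filler. Since a cubical set with an unfilled $2n$-sphere is by definition not $(2n-1)$-coskeletal, $X$ is not $k$-coskeletal for any $k\le 2n-1$ by the monotonicity just noted. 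Hence no level below $2n$ can serve as the Aufhebung of $n$, whereas $2n$ does by (i); therefore the Aufhebung of $n$ equals $2n$.

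Since all the real work lives in Proposition~\ref{techcubeprop}, Theorem~\ref{rcubeupperboundthm}, and the construction of the example, I do not expect any genuine obstacle; the only point requiring a moment's attention is the bookkeeping of part (ii) --- that failure of $(2n-1)$-coskeletality genuinely rules out every lower level --- which is immediate from monotonicity. One should also note that the formula is to be read for $n\ge 1$: the base case behaves differently, since a $0$-skeletal cubical set is $1$-coskeletal (as remarked above) but need not be $0$-coskeletal --- for instance the $0$-skeletal cubical set on two vertices, where the $1$-sphere joining them has no filler --- so the Aufhebung of level $0$ is $1$ rather than $0$.
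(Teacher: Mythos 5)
Your proof is correct and takes essentially the same approach as the paper, which proves the theorem in one line by citing Theorem~\ref{rcubeupperboundthm} together with the preceding example. You have usefully spelled out the monotonicity of coskeletality across levels (which the paper leaves implicit) and correctly flagged that the formula should be read for $n\ge 1$.
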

\begin{proof} Immediate from Theorem \ref{rcubeupperboundthm} and the preceding example, which shows that an $n$-skeletal cubical set is not necessarily $(2n-1)$-coskeletal.
\end{proof}

\begin{rmk}\label{othercubermk}
In the literature, there are a plethora of definitions of a cubical category $\mathbb{C}$: e.g., cubical categories with partial diagonals, conjunctions, connections, interchange, etc. These typically contain $\mathbb{I}$ as a non-full subcategory. The categories $\mathbb{C}$ are usually not Reedy categories, but are often generalized Reedy categories, in the sense of Berger and Moerdijk \cite{bergermoerdijkextension}. For such categories, one may again describe canonical factorisations, which are typically only unique up to isomorphism.

For any of these examples, levels again coincide with dimensions. When the canonical factorisations in $\mathbb{C}$ are particularly nice, restriction along the inclusion $\mathbb{I} \rightarrow \mathbb{C}$ will be compatible with the skeletal and coskeletal inclusions of the levels, though this is by no means always the case. The example of cubical categories with partial diagonals has this nice property, and a straightforward argument due to Kennett and Roy \cite{kennettroyzaksanalysis} can be used to prove that the Aufhebung relation is again $2n$. 

More frequently, the restrictions are compatible with only one of the level inclusions. In particular, whenever some epimorphisms in $\mathbb{C}$ cannot be factored as an epimorphism in $\mathbb{I}$ followed by something else, $n$-skeletal presheaves on $\mathbb{C}$ will not be $n$-skeletal as presheaves on $\mathbb{I}$. Nonetheless, the above results at least provide a bound for the Aufhebung relation. This sort of situation is discussed in Corollary \ref{cycliccor}.
\end{rmk}

\section{Aufhebung of simplicial sets}\label{simpsec}

Let $\Delta$ be the category of finite non-empty ordinals and order preserving maps. The objects of $ \Delta$ are the ordered sets $\{0,1,2,\ldots,n\}$ denoted by $[n]$ for each non-negative integer $n$. The morphisms of $\Delta$ are order preserving maps. These are generated by the elementary face and degeneracy maps: for each $n$ there are $n+1$ monics, 
\[
\del{0} \geq \del{1} \geq \ldots \geq \del{n}:[n-1] \rightarrow [n]
\]
such that the image of $\del{i}$ does not contain $i$ and there are $n$ epics 
\[ \sig{0} \leq \sig{1} \leq \ldots \leq \sig{n-1}:[n] \rightarrow [n-1]
\]
such that two elements map to $i \in [n]$. Explicitly, 
\begin{eqnarray*} 
\sig{i}(j)=\left \{ \begin{array}{lc}
                      j & j \leq i \\                      j-1 & j>i \\                     
\end{array} 
\right. & \quad \del{i} (j)=\left \{ \begin{array}{lc}
                     j & j<i \\                      j+1 & j \geq i. \\
\end{array} \right. \\     
\end{eqnarray*}
These maps satisfy the following relations
\begin{eqnarray}
\del{j} \del{i} & = & \del{i}\del{j -1} \,\,\, \qquad 
i < j \label{deleq} \\
\sig{j} \sig{i} & = & \sig{i} \sig{j +1} \,\, \qquad 
i \leq j \label{sigeq} \\
\sig{j} \del{i} & = & \left \{ \begin{array}{ll}
		\del{i} \sig{j -1} & i < j \\
		\mbox{id} & i = j \,\, \mbox{or} \,\, i = j +1 \\
		\del{i -1} \sig{j} & i > j +1\label{mixeq} \\ 
\end{array} \right.
\end{eqnarray}

Any arrow of $\Delta $ can be written uniquely as a composite
$\mu\epsilon $ of a monic $\mu $ and an epic $\epsilon$. Each monic 
$\mu:[m] \rightarrow [n]$ can be factorised uniquely as $\mu = \del{i_{1}}\del{i_{2}} \ldots \del{i_{s}}$ where $n \geq i_{1}>i_{2}> \cdots >i_{s}\geq0$ are the elements of $[n]$ which are not in the image of $\mu$.  Each epic $\epsilon:[n] \rightarrow [m]$ is uniquely of the form $\epsilon = \sig{j_{1}} \cdots \sig{j_{t}}$ where $0\leq j_{1}<j_{2}< \cdots <j_{t} \leq n-1$ are the elements $j \in [n]$ with $\epsilon(j)=\epsilon(j+1)$. 

\begin{defn} The unique factorisation of a morphism of $\Delta$ into a product of the form $$\del{i_s}\cdots \del{i_1}\sig{j_1}\cdots \sig{j_t}$$ as described above is called the \emph{canonical factorisation} of the morphism.
\end{defn} 

A simplicial set is a functor $X:\Delta^{\!\op} \rightarrow \Set$.  We will write $X_{n}$ for the image of the object $[n]$ under the functor $X$.  Elements of $X_n$ are called $n$-\emph{simplices}. Each arrow $\tau:[m] \rightarrow [n]$ in $\Delta$ gives rise to a function $X_{n} \rightarrow X_{m}$ in $\Set$ whose value at $x \in X_{n}$ is denoted $x\tau$. Alternatively, a simplicial set $X$ consists of sets $X_n$ for each $n$ together with right actions by the $\del{i}$, which take $n$-simplices to $(n-1)$-simplices, and the $\sig{j}$, which take $n$-simplices to $(n+1)$-simplices. An $n$-simplex $x$ is \emph{degenerate} if there exists epimorphism $\epsilon:[n] \rightarrow [m]$ 
with $m<n$ and an $m$-simplex $y$ such that  $x=y\epsilon$.

Write $\Delta_n$ for the full subcategory of $\Delta$ on the objects $[0],\ldots,[n]$. The essential subtopos $\Set^{\Delta_n^{\op}}$ of the category $\Set^{\Delta^{\op}}$ of simplicial sets induces pair of adjoint functors  $\sk_n \dashv \cosk_n$ on $\Set^{\Delta^{\op}}$. Concretely, the $n$-\emph{skeleton} $\sk_{n}X$ of a simplicial set $X$ is the subcomplex of $X$ that is formed by all $x \in X_{k}$ such that there exist $y \in X_m$ with $m \leq n$ and an epimorphism $\epsilon :[k] \rightarrow [m]$ in $\Delta$ such that $x=y \epsilon$. As above, a simplicial set $X$ is $n$-\emph{skeletal} if it is isomorphic to its $n$-skeleton, i.e., when each $k$-simplex with $k >n$ is degenerate.

\begin{defn} An $k$-\emph{sphere} or $k$-\emph{cycle} $c$ in $X$ is a sequence of $(k-1)$-simplices $c_{0}, \ldots ,c_{k}$ satisfying the \emph{cycle equations} \begin{equation}\label{cycleeq} c_{j}\del{i} = c_{i}\del{j-1} \hspace{.5cm} \text{for}\ i <j. \end{equation} 
\end{defn}

\begin{ex} Let $c_i = x \del{i}$ for some $k$-simplex $x$ in a simplicial set $X$. Then $c$ is a $k$-sphere in $X$. 
\end{ex}

As in the introduction, $X$ is $n$-\emph{coskeletal} if it is isomorphic to $\cosk_n X$. Concretely, this says that for any $k$-sphere in $X$ with $k >n$ there is a unique $k$-simplex $y$ such that $y \del{i} = c_{i}$ for all 
$0 \leq i \leq k$.  

The following lemma is very important.

\begin{lem}[Eilenberg-Zilber lemma]\label{ezlem}
For each $x \in X_{n}$, there is a unique non-degenerate $y \in X_k$ for some $k\leq n$ together with a unique epimorphism $\e : [n] \rightarrow [k]$ such that $x = y\e$.
is unique.
\end{lem}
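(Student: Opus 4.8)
The statement to prove is the Eilenberg--Zilber lemma for simplicial sets: for each $x \in X_n$ there is a unique non-degenerate $y \in X_k$ together with a unique epimorphism $\e : [n] \rightarrow [k]$ such that $x = y\e$.

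This is a direct analogue of Lemma \ref{cezlem}, the cubical Eilenberg--Zilber lemma, which was proved a few pages earlier, so the plan is to transcribe that argument, replacing the cube category $\mathbb{I}$ by $\Delta$ and the cubical generators by the simplicial ones. Specifically, I would proceed as follows.

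First, existence: given $x \in X_n$, either $x$ is non-degenerate, in which case take $k = n$, $y = x$, $\e = \id$; or $x = z\e'$ for some epimorphism $\e'$ onto a strictly lower dimension, and by induction on $n$ the simplex $z$ has such a decomposition $z = y\e''$ with $y$ non-degenerate, whence $x = y(\e''\e')$ and $\e''\e'$ is an epimorphism. This half is routine and I would dispatch it in one sentence, as the authors did in the cubical case.

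Second, uniqueness: suppose $x = y\e$ and $x = y'\e'$ with $y \in X_k$, $y' \in X_{k'}$ both non-degenerate and $\e, \e'$ epimorphisms. Choose sections $\mu, \mu'$ in $\Delta$ for $\e, \e'$ respectively (epimorphisms in $\Delta$ split). Then $y = y\e\mu = x\mu = y'\e'\mu$. Since $y$ is non-degenerate, the epic part of the canonical factorisation of $\e'\mu$ must be trivial, so $\e'\mu$ is a monomorphism $[k] \to [k']$; symmetrically $\e\mu'$ is a monomorphism $[k'] \to [k]$. Hence $k = k'$, both monomorphisms are identities, and $y = y'$. Then $\mu$ is a section of both $\e$ and $\e'$, and since in $\Delta$ a split epimorphism is determined by any one of its sections (equivalently, a section determines its retraction — this is the fact highlighted in the Remark after the introduction), we get $\e = \e'$.

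The only step requiring a moment's thought is the claim that in $\Delta$ a section uniquely determines its retraction, i.e.\ that if $\mu$ is a common section of two epimorphisms $\e, \e' : [n] \to [k]$ then $\e = \e'$. This follows because an order-preserving surjection $[n] \twoheadrightarrow [k]$ is completely determined by the set $\{ j : \e(j) = \e(j+1)\}$ of ``collapsed'' adjacent pairs, and a section $\mu$ with $\e\mu = \id$ forces, for each $i \in [k]$, the fibre $\e^{-1}(i)$ to be an interval of $[n]$ containing $\mu(i)$; since the fibres partition $[n]$ into consecutive intervals, knowing that each contains the prescribed value $\mu(i)$ and that consecutive fibres are adjacent pins down the partition, hence $\e$. (Alternatively: $\e$ is the unique epimorphism with $\e\mu = \id$ because $\mu\e$ is then an idempotent on $[n]$ and idempotents in $\Delta$ that split through a given mono are unique.) I expect this to be the main --- though still minor --- obstacle; everything else is a formal diagram chase identical to the cubical proof. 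Since the excerpt already flags this property of Reedy categories whose degree-lowering maps are determined by their sections, I would simply invoke it.
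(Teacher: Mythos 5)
Your transcription of the cubical argument breaks at the final step. In $\mathbb{I}$ a single section does determine its retraction: a mono inserts coordinates at certain positions, and a retracting epi must delete exactly those positions, so the epi is pinned down. In $\Delta$ this fails. For instance, take $\mu : [1] \to [2]$ with $\mu(0)=0$, $\mu(1)=2$; both $\sig{0}$ and $\sig{1}$ retract $\mu$. So ``a split epimorphism in $\Delta$ is determined by any one of its sections'' is simply false, and the auxiliary argument you give for it (fibres are intervals, each contains $\mu(i)$, so the partition is pinned down) has a hole: the elements of $[n]$ not in the image of $\mu$ can be assigned to either neighbouring fibre. The alternative phrasing about idempotents has the same flaw: $\mu\sig{0}$ and $\mu\sig{1}$ are distinct idempotents on $[2]$ both splitting through $\mu$. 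You have also misquoted the Remark you lean on: it says degree-lowering arrows are determined by the \emph{set} of their sections, not by one section.

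The fix is to use the full hypothesis $y\e = y\e'$ (with $y=y'$, $k=k'$ already established), rather than just one shared section, to show $\e$ and $\e'$ have the \emph{same} set of sections. If $\mu$ is any section of $\e$, then $y\e'\mu = y\e\mu = y$; canonically factor $\e'\mu = \mu''\e''$ and observe that since $y$ is non-degenerate, $\e''$ is the identity, so $\e'\mu$ is a mono $[k]\to[k]$, hence the identity, hence $\mu$ is a section of $\e'$. By symmetry the section sets coincide, and in $\Delta$ an epimorphism is determined by its full section set (the fibres of $\e$ are exactly the sets $\{\mu(i) : \e\mu = \id\}$), giving $\e = \e'$. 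This is the genuinely ``similar but not identical'' point which the paper's one-line proof and accompanying Remark are signalling.
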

\begin{proof} Similar to \ref{cezlem} or see \cite[pp 26-27]{gabrielzisman}.
\end{proof}

When $x = y\e$ as in the lemma, we say $x$ \emph{has degeneracy} $n-k$ and write $\dgn(x)=n-k$. Note, the canonical factorisation of $\e$ will have the form $\sig{j_1}\cdots\sig{j_{n-k}}$.

\begin{lem}\label{dgnlem}
Let $x$ be an $n$-simplex. Then for $0\leq i \leq n$
\begin{align*} \dgn(x\sig{i}) & =  \dgn(x) +1 \\
\dgn(x\del{i}) & \geq \dgn(x) -1.
\end{align*}
\end{lem}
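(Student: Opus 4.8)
The plan is to deduce both statements directly from the Eilenberg-Zilber lemma (Lemma \ref{ezlem}), exactly as the cubical analogue (Lemma \ref{cubedgnlem}) is deduced from Lemma \ref{cezlem}. Write $x = y\e$ for the Eilenberg-Zilber decomposition of $x$, so that $y \in X_k$ is non-degenerate, $\e : [n] \to [k]$ is an epimorphism, and $\dgn(x) = n - k$.

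For the first identity, note that $\sig{i} : [n+1] \to [n]$, so $\e\sig{i} : [n+1] \to [k]$ is a composite of epimorphisms, hence an epimorphism, and $x\sig{i} = y(\e\sig{i})$. Since $y$ is non-degenerate, the uniqueness clause of Lemma \ref{ezlem} identifies this as the Eilenberg-Zilber decomposition of $x\sig{i}$, whence $\dgn(x\sig{i}) = (n+1) - k = \dgn(x) + 1$. For the inequality, factor $\e\del{i} : [n-1] \to [k]$ as $\mu'\e'$ with $\mu' : [m] \to [k]$ a monic and $\e' : [n-1] \to [m]$ an epic. Then $x\del{i} = (y\mu')\e'$, so the non-degenerate part of $x\del{i}$ has dimension at most $m$; since $\mu'$ is monic, $m \leq k$, and therefore $\dgn(x\del{i}) = (n-1) - \dim(\text{non-degenerate part of } x\del{i}) \geq (n-1) - k = \dgn(x) - 1$.

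There is no genuine obstacle here; the two points worth stating explicitly are the appeal to uniqueness in Lemma \ref{ezlem} in the first part — this is what upgrades ``degeneracy does not decrease'' to ``degeneracy strictly increases'' — and the observation that the domain of a monic into $[k]$ has dimension at most $k$ in the second part. Everything else is immediate from the epi-mono factorisation in $\Delta$.
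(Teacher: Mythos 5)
Your proof is correct and takes exactly the approach the paper has in mind: the paper's proof is the one-liner ``Obvious using Lemma \ref{ezlem},'' and your argument simply spells out the details that are left implicit. The two points you flag explicitly — that uniqueness in the Eilenberg-Zilber lemma forces $y(\e\sig{i})$ to \emph{be} the decomposition of $x\sig{i}$ (giving equality rather than $\geq$), and that the epi-mono factorisation of $\e\del{i}$ bounds the dimension of the non-degenerate part — are precisely the content being elided.
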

\begin{proof}
Obvious using Lemma \ref{ezlem}.
\end{proof}

Using (\ref{mixeq}), the degenerate simplex $x\sig{i}$ has $x$ as its $i$-th and $(i+1)$-th faces and degeneracies for all of the other faces, even if $x$ is non-degenerate. We will be interesting in identifying which faces of a degenerate simplex are least degenerate. Hence, the following definition. 

\begin{defn}
Say $i \in [n]$ {\em reduces} an $n$-simplex $x$ when $\dgn(x\del{i})=\dgn(x)-1$. 
\end{defn}

\begin{lem}\label{tfaelem} Let $x$ be an $n$-simplex and suppose $0 \leq i \leq n$. The following are equivalent: \\ \indent \emph{(i)} $i$ reduces $x$ \\ \indent \emph{(ii)} $x=x\del{i}\sig{i}$ or $x=x\del{i}\sig{i-1}$ 
\end{lem}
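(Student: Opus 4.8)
The plan is to reduce everything to a statement about the Eilenberg--Zilber decomposition $x = y\epsilon$ of $x$, where $y$ is non-degenerate and $\epsilon$ epic. One direction, (ii) $\Rightarrow$ (i), I would dispatch immediately using Lemma \ref{dgnlem}: if $x = x\del{i}\sig{i}$, or $x = x\del{i}\sig{i-1}$, then $\dgn(x) = \dgn(x\del{i}) + 1$, so $\dgn(x\del{i}) = \dgn(x) - 1$ and $i$ reduces $x$. All the work is in (i) $\Rightarrow$ (ii).

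For that direction I would write $\dgn(x) = n - k =: r$ with $y \in X_k$, so that $x\del{i} = y(\epsilon\del{i})$ and, by hypothesis, $\dgn(x\del{i}) = r - 1$. The key claim to establish is that $\epsilon\del{i} \colon [n-1] \to [k]$ is itself an epimorphism. I would prove this by a dimension count: factoring $\epsilon\del{i} = \mu\bar\epsilon$ with $\mu \colon [m] \to [k]$ monic and $\bar\epsilon$ epic, the equality $x\del{i} = (y\mu)\bar\epsilon$ exhibits $x\del{i}$ as a degeneracy of an $m$-simplex, so $r - 1 = \dgn(x\del{i}) \geq (n-1) - m \geq (n-1) - k = r - 1$; equality forces $m = k$, hence $\mu = \id$ and $\epsilon\del{i} = \bar\epsilon$ is epic. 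Since $\del{i}$ has image $[n] \setminus \{i\}$, surjectivity of $\epsilon\del{i}$ says exactly that $\epsilon$ remains surjective after deleting $i$ from its domain; because the fibres of $\epsilon$ are nonempty intervals, this holds precisely when $i$ shares a fibre with a neighbour, i.e. $\epsilon(i-1) = \epsilon(i)$ or $\epsilon(i) = \epsilon(i+1)$. To finish, I would note (from the defining formulas, or equivalently from the simplicial identity (\ref{mixeq}), which exhibits $\del{i}$ as a section of both $\sig{i}$ and $\sig{i-1}$) that $\del{i}\sig{i} \colon [n] \to [n]$ is the identity except that $i \mapsto i+1$, while $\del{i}\sig{i-1}$ is the identity except that $i \mapsto i-1$. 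Thus $\epsilon(i) = \epsilon(i+1)$ yields $\epsilon\del{i}\sig{i} = \epsilon$ and hence $x = y\epsilon = y(\epsilon\del{i}\sig{i}) = x\del{i}\sig{i}$, while $\epsilon(i-1) = \epsilon(i)$ yields $x = x\del{i}\sig{i-1}$ in the same way.

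The one genuinely substantive step is the claim that $\dgn(x\del{i}) = \dgn(x) - 1$ forces $\epsilon\del{i}$ to be an epimorphism; everything after that is bookkeeping with monotone surjections. I expect the main (minor) obstacle to be handling the boundary indices $i = 0$ and $i = n$ cleanly --- there only one of $\sig{i}$, $\sig{i-1}$ is defined, which matches the fact that $i$ then has a single neighbour --- and arranging the dimension-counting inequality so that equality is actually forced.
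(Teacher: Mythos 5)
Your proof is correct and follows essentially the same route as the paper's: the direction (ii) $\Rightarrow$ (i) via Lemma~\ref{dgnlem}, and (i) $\Rightarrow$ (ii) by writing $x=y\epsilon$, showing $\epsilon\del{i}$ must be epic by a dimension count, then translating surjectivity of $\epsilon\del{i}$ into the fibre condition $\epsilon(i)=\epsilon(i\pm1)$. You have in fact matched the two subcases correctly --- $\epsilon(i)=\epsilon(i+1)$ gives $x=x\del{i}\sig{i}$ and $\epsilon(i-1)=\epsilon(i)$ gives $x=x\del{i}\sig{i-1}$, consistent with the later characterisation of proper reduction --- whereas the paper's proof appears to have these two cases transposed.
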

\begin{proof}
(i) $ \Rightarrow $ (ii). Write $x = y\epsilon$ as in Lemma \ref{ezlem}. If $\epsilon\del{i}$ is not epic, it factors through $[k-1]$, which contradicts the fact that $\dgn(x\del{i})=n-k-1$. Hence, $\epsilon\del{i}$ is epic, which means that $\e (i) = \e(i-1)$ or $\e(i) = \e(i+1)$. In the first case, $\e = \e \del{i}\sig{i}$ and in the second $\e = \e\del{i}\sig{i-1}$, which implies (ii).

(ii) $ \Rightarrow $ (i). Let $j=i-1$ or $j=i$, as appropriate. By (ii) and Lemma \ref{dgnlem}, $$\dgn(x)=\dgn(x\del{i}\sig{j})=\dgn(x\del{i})+1 \geq \dgn(x).$$   So the inequality is an equality and $i$ reduces $x$.
\end{proof}

By the lemma, if $i$ reduces $x$ then $x$ is a degenerate copy of its $i$-th face. Either, this $i$-th face appears as the $i$-th and $(i+1)$-st faces of $x$, in which case $x = x \del{i}\sig{i}$; or it's the $(i-1)$-st and $i$-th faces, in which case $x=x\del{i}\sig{i-1}$. To enable subsequent accounting, we artificially choose to prefer the former and introduce terminology to distinguish these situations.

\begin{defn}
Say $i\in [n]$ \emph{properly reduces} $x$ when $x=x\del{i}\sig{i}$.
\end{defn}

\begin{ex}\label{prex} If $x = y\sig{i}$, then \[ x = y \sig{i} = y (\sig{i} \del{i}) \sig{i} = y \sig{i} (\del{i} \sig{i}) = x \del{i} \sig{i} \] by (\ref{mixeq}). So $i$ properly reduces $x$.
\end{ex}

\begin{rmk}\label{prrmk} It follows from the computation in \ref{prex} that $i$ properly reduces $x$ if and only if $\sig{i}$ appears in the canonical factorisation of the epimorphism of the Eilenberg-Zilber decomposition of $x$. In particular, $x$ is properly reduced by exactly $\dgn(x)$ ordinals.
\end{rmk}

\begin{lem}\label{oprlem} If $i$ properly reduces $x$, then $i+1$ reduces $x$ but not necessarily properly.
\end{lem}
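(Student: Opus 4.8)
The plan is to derive the first assertion from a single application of the mixed simplicial identity (\ref{mixeq}). Suppose $i$ properly reduces $x$, so that $x = x\del{i}\sig{i}$; since this presupposes that $\sig{i}$ is defined on an $n$-simplex we have $i \le n-1$, hence $i+1 \in [n]$ is a legitimate face index. First I would apply $\del{i+1}$ to the defining equation and simplify the string $\sig{i}\del{i+1}$ using the clause $i = j+1$ of (\ref{mixeq}) (coface index one more than codegeneracy index), which reads $\sig{i}\del{i+1} = \id$; this gives $x\del{i+1} = x\del{i}\sig{i}\del{i+1} = x\del{i}$. Substituting back into $x = x\del{i}\sig{i}$ yields $x\del{i+1}\sig{i} = x\del{i}\sig{i} = x$. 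This is precisely the ``$x = x\del{j}\sig{j-1}$'' alternative of Lemma \ref{tfaelem}(ii) with $j = i+1$, so $i+1$ reduces $x$.

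For the remaining clause --- that the reduction by $i+1$ need not be proper --- I would not prove anything but instead exhibit an example. Take any non-degenerate simplex $y$ (for instance the top-dimensional simplex of a standard simplex $\Delta^n$) and set $x = y\sig{i}$. Then $i$ properly reduces $x$ by Example \ref{prex}, while Remark \ref{prrmk} identifies the ordinals that properly reduce $x$ with the codegeneracy indices occurring in the canonical factorisation of the Eilenberg--Zilber epimorphism of $x$, namely the single index $i$. Hence $i+1 \ne i$ fails to properly reduce $x$, even though the first part of the lemma guarantees that it does reduce $x$.

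I do not expect a genuine obstacle here; the proof is essentially a one-line manipulation. The only points deserving attention are bookkeeping ones: correctly locating the instance $\sig{i}\del{i+1} = \id$ within (\ref{mixeq}), and checking that $i+1$ lies in $[n]$ so that Lemma \ref{tfaelem} applies to it. As a cross-check one could instead argue via Eilenberg--Zilber directly: writing $x = y\e$ with $\e = \sig{j_1}\cdots\sig{j_t}$ canonically factorised and $\sig{i}$ among the $\sig{j_k}$ (which is what proper reduction by $i$ means, by Remark \ref{prrmk}), one checks that $\e\del{i+1}$ is still an epimorphism, and then the argument of Lemma \ref{tfaelem} shows $i+1$ reduces $x$; the displayed computation above is simply the global shadow of this observation.
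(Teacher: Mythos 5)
Your proof is correct and uses essentially the same one-line manipulation as the paper: you first derive $x\del{i+1} = x\del{i}$ from $\sig{i}\del{i+1} = \id$ and then apply $\sig{i}$, whereas the paper compresses this into the single chain $x\del{i+1}\sig{i} = (x\del{i}\sig{i})\del{i+1}\sig{i} = x\del{i}\sig{i} = x$, after which both of you invoke Lemma \ref{tfaelem}(ii). Your added example via Remark \ref{prrmk} for the ``not necessarily properly'' clause is a reasonable supplement; the paper leaves that clause implicit.
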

\begin{proof} Assuming $i$ properly reduces $x$, then \[ x \del{i+1} \sig{i} =  (x \del{i} \sig{i}) \del{i+1} \sig{i} = x \del{i} (\sig{i} \del{i+1}) \sig{i} = x \del{i} \sig{i} = x,\] which says that $i+1$ reduces $x$ by Lemma \ref{tfaelem}.
\end{proof}

\begin{lem}\label{prlem} If $i$ reduces $x$ but $i$ does \emph{not} properly reduce $x$, then $i-1$ properly reduces $x$.
\end{lem}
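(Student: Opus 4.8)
The plan is to feed the two hypotheses into Lemma \ref{tfaelem} to obtain a single explicit identity, and then to run a short string manipulation with the mixed simplicial identities (\ref{mixeq}) — essentially the computation from the proof of Lemma \ref{oprlem}, with shifted indices — to extract the conclusion.

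First I would unwind the hypothesis. By Lemma \ref{tfaelem}, the assumption that $i$ reduces $x$ forces $x = x\del{i}\sig{i}$ or $x = x\del{i}\sig{i-1}$. The first of these is, by definition, the assertion that $i$ properly reduces $x$, which is excluded, so we must have $x = x\del{i}\sig{i-1}$. In particular $\sig{i-1}$ is defined, so $i \geq 1$ and $i-1$ is a legitimate index in $[n]$; equivalently, when $i = 0$ Lemma \ref{tfaelem} only offers the proper-reduction option, so the hypothesis cannot hold in that case.

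Next I would substitute this expression for $x$ into $x\del{i-1}\sig{i-1}$, producing the string $x\del{i}\sig{i-1}\del{i-1}\sig{i-1}$. Regrouping, the interior pair $\sig{i-1}\del{i-1}$ collapses to $\id$ by (\ref{mixeq}) (the case $i = j$ with $j = i-1$), which leaves $x\del{i}\sig{i-1}$, and this is $x$ once more by the hypothesis. Hence $x = x\del{i-1}\sig{i-1}$, i.e. $i-1$ properly reduces $x$.

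There is essentially no obstacle here. The only points requiring care are the correct reading of Lemma \ref{tfaelem} — pinning down which of the two disjuncts must hold — and the observation that the hypothesis itself guarantees $i \geq 1$ so that $\sig{i-1}$ makes sense; both follow immediately from the definition of proper reduction. The calculation is then a single application of the mixed identity, entirely parallel to the proof of Lemma \ref{oprlem}.
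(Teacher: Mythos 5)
Your proof is correct and follows the paper's argument exactly: you use Lemma \ref{tfaelem} to pin down $x = x\del{i}\sig{i-1}$, substitute into $x\del{i-1}\sig{i-1}$, and collapse the middle pair $\sig{i-1}\del{i-1}$ to the identity via (\ref{mixeq}). The only additions are the (correct) remarks that $i\geq 1$ and that the hypothesis excludes the first disjunct of Lemma \ref{tfaelem}, both of which the paper leaves implicit.
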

\begin{proof} By Lemma \ref{tfaelem}, if $i$ reduces $x$ but not properly, then $x = x\del{i}\sig{i-1}$. By substitution and (\ref{mixeq}), \[ x \del{i-1} \sig{i-1} = x \del{i} \sig{i-1} \del{i-1} \sig{i-1} = x \del{i} \sig{i-1} = x, \] which says that $i-1$ properly reduces $x$.
\end{proof}

It will be clear from the following lemma that the converse to Lemma \ref{prlem} does not hold.

\begin{lem}Suppose $x=y\e$ with $y$ non-degenerate and $\e$ epic.  
Then $i$ properly reduces $x$ precisely when $\e(i) = \e(i+1)$.
\end{lem}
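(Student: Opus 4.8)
The plan is to read the statement off the bookkeeping already in place, using Remark~\ref{prrmk}, the uniqueness clause of the Eilenberg-Zilber lemma, and the explicit combinatorial description of canonical factorisations of epics. The first move is to observe that the hypothesis identifies $x = y\e$ as \emph{the} Eilenberg-Zilber decomposition of $x$: by the uniqueness part of Lemma~\ref{ezlem}, the non-degenerate simplex $y$ and the epimorphism $\e$ appearing here are exactly the data occurring in that decomposition. Hence the canonical factorisation $\sig{j_1}\cdots\sig{j_t}$ of $\e$ is precisely the one referred to in Remark~\ref{prrmk}.

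Next I would recall, from the discussion of canonical factorisations preceding the relevant definition, that the indices $0 \le j_1 < \cdots < j_t \le n-1$ in the canonical factorisation of an epic $\e : [n] \to [m]$ are exactly the elements $j \in [n]$ with $\e(j) = \e(j+1)$. Therefore $\sig{i}$ appears in the canonical factorisation of $\e$ if and only if $\e(i) = \e(i+1)$. Combining this with Remark~\ref{prrmk}, which asserts that $i$ properly reduces $x$ if and only if $\sig{i}$ appears in the canonical factorisation of the epimorphism of the Eilenberg-Zilber decomposition of $x$ — and we have just identified that epimorphism with $\e$ — chains the two equivalences into the claim.

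I expect no genuine obstacle here; the only step demanding a moment's care is invoking the uniqueness in Lemma~\ref{ezlem} so that Remark~\ref{prrmk} applies to the particular factorisation we are handed rather than to some abstractly chosen one. A more hands-on alternative would be to note that $i = j_t$ properly reduces $x$ directly by Example~\ref{prex} (since $x = (y\sig{j_1}\cdots\sig{j_{t-1}})\sig{j_t}$), and then to propagate this to the remaining collapsed indices by repeated use of the relation $\sig{j}\sig{i} = \sig{i}\sig{j+1}$; but tracking the index shifts is exactly the computation that Remark~\ref{prrmk} has already absorbed, so I would simply cite it.
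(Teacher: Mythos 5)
Your argument is correct and, once one grants Remark~\ref{prrmk}, immediate; it is a shorter route than the paper's, which instead argues directly: $\e(i)=\e(i+1)$ is equivalent to $\e\del{i}\sig{i}=\e$ (by the combinatorial description of epics), and for the converse the paper invokes the observation, already made in the proof of Lemma~\ref{tfaelem}, that $\e\del{i}$ is epic when $i$ reduces $x$, so that the uniqueness in Lemma~\ref{ezlem} upgrades $y\e\del{i}\sig{i}=y\e$ to $\e\del{i}\sig{i}=\e$. Both proofs ultimately rest on the same two ingredients --- the combinatorics of canonical factorisations of epics in $\Delta$ and Eilenberg--Zilber uniqueness --- but you outsource the hard half to Remark~\ref{prrmk}, whereas the paper supplies it in place. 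One caution worth making explicit: the ``only if'' direction of Remark~\ref{prrmk} does not in fact drop out of the computation in Example~\ref{prex} alone, despite the remark's phrasing; establishing it requires precisely the EZ-uniqueness step that the paper's proof of the present lemma carries out. There is no circularity (the remark can be proved independently of this lemma by writing $x=(x\del{i})\sig{i}$ and applying EZ uniqueness to the resulting epic), but the remark is doing more work than it admits to, and the paper's direct proof is arguably what puts it on solid footing. If you rely on~\ref{prrmk}, you should at least note that the uniqueness clause of Lemma~\ref{ezlem} is what lets you identify the given $\e$ with the EZ epimorphism, as you do in your first paragraph --- that identification is the load-bearing step.
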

\begin{proof}  If $\e(i)= \e(i+1)$ then $\e\del{i}\sig{i}=\e$, so $x \del{i}\sig{i} =x$, which says that $i$ properly reduces $x$. Conversely, if $y\e\del{i}\sig{i} = y\e$, we saw in the proof of Lemma \ref{tfaelem} that $\e\del{i}$ is epi, so by uniqueness $\e = \e\del{i}\sig{i}$ and hence $\e(i) =\e(i+1)$.
\end{proof}

The main technical tool in the computation of the Aufhebung relation for simplicial sets is the following proposition, which we will use to show that spheres consisting of highly degenerate simplices can be filled by a cleverly chosen degenerate copy of one of the least degenerate constituent faces.

\begin{prop}\label{techprop} Let $X$ be a simplicial set which is $n$-skeletal, and let $c$ be a $k$-sphere in $X$ whose faces $c_0,\ldots, c_k$ all have degeneracy at least 2. Let $r$ be the minimal degeneracy the faces $c_i$  
and let $m$ be the smallest ordinal with $\dgn(c_{m})=r$.  If $k<2r+3$ then 
this sphere is filled by $c_m\sig{m}$.
\end{prop}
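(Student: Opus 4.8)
The plan is to imitate the proof of Proposition~\ref{techcubeprop}, step for step. Let $c_m=y\epsilon$ be the Eilenberg--Zilber decomposition of $c_m$ (Lemma~\ref{ezlem}), so that $\epsilon\colon[k-1]\to[k-1-r]$ is epic, and let $M=\{\,j:\epsilon(j)=\epsilon(j+1)\,\}$ be the set of ordinals that properly reduce $c_m$; by Remark~\ref{prrmk} it has exactly $r$ elements, and a dimension count puts $M\subseteq\{0,\dots,k-2\}$. Exactly as in the cubical case, minimality of $r$ and then of $m$, fed through the cycle equations, forces every $j\in M$ to satisfy $j\ge m$: if some $j<m$ reduced $c_m$, the cycle equations would give $\dgn(c_j)\le r$, contradicting the choice of $m$.

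The first step is the analogue of the cubical identity $\cc{0}{m}=\cc{1}{m}$, namely $c_m=c_{m+1}$; this is what makes $c_m\sig{m}$ carry the correct $m$-th and $(m+1)$-st faces. First one shows that every $j\in M$ \emph{properly} reduces $c_{j+1}$: the cycle equations identify $c_{j+1}\del{m}$ with $c_m\del{j}$, which has degeneracy $r-1$, so $m$ reduces $c_{j+1}$, and it cannot do so improperly --- for then Lemma~\ref{prlem} would make $m-1$ properly reduce $c_{j+1}$, and the cycle equations would force $\dgn(c_{m-1})\le r$, against minimality of $m$. Hence $c_{j+1}\del{m}=c_{j+1}\del{m+1}$, and feeding this back through the cycle equations (for the $j\in M$ with $j>m$) gives $c_m\del{j}=c_{m+1}\del{j}$. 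Since such a $j$ reduces both $c_m$ and $c_{m+1}$, Eilenberg--Zilber uniqueness applied to this equation identifies the non-degenerate part of $c_{m+1}$ with $y$ and its epic part with $\epsilon$ off the single index $j$; as $r\ge2$ there are at least two available values of $j$ (two elements of $M$ above $m$, or one of them together with $m$ when $m\in M$), and any two of them pin the epic part down completely, so $c_{m+1}=y\epsilon=c_m$.

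It remains to check $c_u=c_m\sig{m}\del{u}$ for all $u$. By the mixed relation~(\ref{mixeq}) the cases $u=m$ and $u=m+1$ are exactly the first step, and the rest divides into three parts parallel to the cubical argument. In \emph{Part I} ($u=j+1$ with $j\in M$) one computes directly from the cycle equations and the fact that $m$ properly reduces $c_{j+1}$. In \emph{Part II} ($u<m$), minimality gives $\dgn(c_u)\ge r+1$, so $c_u$ is properly reduced by at least $r+1$ ordinals in $\{0,\dots,k-2\}$; since $\{m-1\}\cup M$ is another $(r+1)$-element subset and $k<2r+3$, the two meet in some $p$, and one rewrites $c_u=c_u\del{p}\sig{p}$ (or $c_u\del{p}\sig{p-1}$), commutes the degeneracy past $\del{u}$ using~(\ref{mixeq}) and the cycle equations, and invokes Part I when $p\in M$. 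In \emph{Part III} ($u>m+1$ with $u-1\notin M$) one runs the same kind of pigeonhole on a suitable auxiliary set built from $M$, extracts from $k<2r+3$ an element that properly reduces $c_u$, and computes with~(\ref{deleq})--(\ref{mixeq}), Part I, and the case split $p<u$ versus $p\ge u$, just as in the cubical Part III. Assembling the three parts shows that $c_m\sig{m}$ fills $c$.

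The main obstacle, and the only genuine difficulty absent from the cubical proof, is the proper-versus-improper reduction bookkeeping. The simplicial identity $\sig{j}\del{i}=\id$ holds for \emph{both} $i=j$ and $i=j+1$, so knowing that $i$ reduces a simplex does not say which of the two degeneracies recovers it, and one must repeatedly invoke Lemmas~\ref{oprlem} and~\ref{prlem} to trade a reduction at $i$ for a \emph{proper} reduction at $i$ or at $i\pm1$. Keeping these $\pm1$ shifts mutually consistent through the three parts --- and checking that, after them, the pigeonhole sets remain large enough to use the hypothesis $k<2r+3$ --- is where essentially all the work lies; once that accounting is in place, the rest is a mechanical transcription of the cubical proof.
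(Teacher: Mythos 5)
Your overall plan matches the paper's, and you have correctly identified the genuine new difficulty (proper versus improper reductions, handled by Lemmas~\ref{oprlem} and~\ref{prlem}). Your argument that $c_m=c_{m+1}$ is actually cleaner than the one the paper uses: from two distinct $j,j'\in M$ with $j,j'>m$ one gets $y\epsilon\del{j}=z\eta\del{j}$ and $y\epsilon\del{j'}=z\eta\del{j'}$ with $\epsilon\del{j},\eta\del{j},\epsilon\del{j'},\eta\del{j'}$ all epic, so Eilenberg--Zilber uniqueness gives $y=z$ and forces $\epsilon$ and $\eta$ to agree on $[k-1]\setminus\{j\}$ and on $[k-1]\setminus\{j'\}$, hence everywhere. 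The paper instead passes through an auxiliary implication (``some $j$ properly reduces both $c_m$ and $c_{m+1}$ forces $c_m=c_{m+1}$''), which you avoid. (The case $m\in M$ you should dispose of separately via Lemma~\ref{techlem1}(c), since for $j=m$ the second cycle equation you need is not available.)

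The gap is in your Part~III. You claim that for $u>m+1$, $u-1\notin M$, the pigeonhole ``extracts from $k<2r+3$ an element that \emph{properly} reduces $c_u$.'' This is false when $\dgn(c_u)=r$. In that case $c_u$ has exactly $r$ proper reducers (Remark~\ref{prrmk}), while $K$ has $r+1$ elements; both live in $\{0,\dots,k-2\}$, which has $k-1$ elements, so the pigeonhole requires $r+(r+1)>k-1$, i.e.\ $k<2r+2$ --- but the hypothesis only gives $k\leq 2r+2$, and the borderline value $k=2r+2$ is exactly where the interesting spheres live. The paper instead uses Lemma~\ref{oprlem} to pass to $r+1$ ordinals that merely \emph{reduce} $c_u$, accepting that the extracted $p\in K$ may be an improper reducer; the subsequent computation is then not the direct rewriting of the cubical Part~III, but a degeneracy-count argument: one shows $\dgn(c_u\del{p})=r-1$, transfers this through the cycle equations to a reduction of $c_m$, and invokes Lemma~\ref{techlem1}(b). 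Moreover, even the weaker pigeonhole fails when the $r$ proper reducers of $c_u$ are exactly $\{k-1-r,\dots,k-2\}$ (so that Lemma~\ref{oprlem} produces only $k-1\notin[k-2]$) and none of them lies in $K$. This ``pathological'' case forces $k=2r+2$, $m=0$, $K=\{0,\dots,r\}$, and requires an entirely separate argument that your sketch does not anticipate. You also fold the paper's two cases $\dgn(c_u)>r$ and $\dgn(c_u)=r$ into one Part~III, but they genuinely require different arguments (the first works with proper reducers, the second does not), so you should keep them apart.
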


To aid the proof of this proposition, we use some technical lemmas.

\begin{lem}\label{techlem1} Let $c$ be a $k$-sphere in $X$ with all faces degenerate. Let $r$ be the minimal degeneracy of the faces $c_0,\ldots, c_k$, and let $m$ be the first ordinal with $\dgn(c_m)=r$. Suppose $j$ reduces $c_m$. Then \\ \indent \emph{(a)} $j\geq m$ and $m$ properly reduces $c_{j+1}$. \\ \indent \emph{(b)} Furthermore, $c_{j+1} = c_m \sig{m} \del{j+1}$. \\ \indent \emph{(c)} If $m$ reduces $c_m$, then $m$ properly reduces $c_m$ and $c_m = c_{m+1}$.
\end{lem}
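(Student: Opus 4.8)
The plan is to derive everything from the cycle equations (\ref{cycleeq}), the Eilenberg--Zilber degeneracy count of Lemma \ref{dgnlem}, and the minimality of $r$ and of $m$, using the simplicial identities (\ref{mixeq}) only for the final manipulations. To begin with part (a), I would first rule out $j<m$: in that case the cycle equation gives $c_m\del{j}=c_j\del{m-1}$, so $\dgn(c_j\del{m-1})=\dgn(c_m\del{j})=r-1$ since $j$ reduces $c_m$; by Lemma \ref{dgnlem} this forces $\dgn(c_j)\le r$, hence $\dgn(c_j)=r$ by minimality of $r$, contradicting the choice of $m$. So $j\ge m$, and since $j$ reduces the $(k-1)$-simplex $c_m$ we get $m\le j\le k-1$; in particular every face named below is a genuine face of $c$.

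The recurring device is the following claim: \emph{if $\ell\ge m$ is a face index with $\dgn(c_\ell)=r$ and $m$ reduces $c_\ell$, then $m$ properly reduces $c_\ell$.} When $m=0$ there is nothing to check, since $\sig{-1}$ is not a morphism and Lemma \ref{tfaelem} then offers only the proper option. When $m\ge 1$, suppose $m$ reduces $c_\ell$ improperly; Lemma \ref{prlem} then makes $m-1$ properly reduce $c_\ell$, so $\dgn(c_\ell\del{m-1})=\dgn(c_\ell)-1=r-1$, and the cycle equation ($m-1<m\le\ell$) rewrites the left side as $c_{m-1}\del{\ell-1}$, giving $\dgn(c_{m-1})\le r$, hence $\dgn(c_{m-1})=r$ --- contradicting minimality of $m$.

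Part (a) is now immediate: the cycle equation gives $c_{j+1}\del{m}=c_m\del{j}$, so $\dgn(c_{j+1}\del{m})=r-1$; since $\dgn(c_{j+1})\ge r$, Lemma \ref{dgnlem} forces $\dgn(c_{j+1})=r$ and shows $m$ reduces $c_{j+1}$, and the device (with $\ell=j+1\ge m$) upgrades this to proper reduction. For part (b), proper reduction gives $c_{j+1}=c_{j+1}\del{m}\sig{m}=c_m\del{j}\sig{m}$; when $j>m$ the identity (\ref{mixeq}) reads $\del{j}\sig{m}=\sig{m}\del{j+1}$, so $c_{j+1}=c_m\sig{m}\del{j+1}$, while the remaining case $j=m$ is handled inside part (c).

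For part (c), assume $m$ reduces $c_m$. The device with $\ell=m$ shows $m$ properly reduces $c_m$, i.e. $c_m=c_m\del{m}\sig{m}$; and part (a) applied with $j=m$ (legitimate, since $m$ reduces $c_m$) shows $m$ properly reduces $c_{m+1}$. Combining this with the cycle equation $c_{m+1}\del{m}=c_m\del{m}$ gives $c_{m+1}=c_{m+1}\del{m}\sig{m}=c_m\del{m}\sig{m}=c_m$; this also finishes part (b) in the case $j=m$, since $\sig{m}\del{m+1}=\id$ by (\ref{mixeq}), so $c_{j+1}=c_{m+1}=c_m=c_m\sig{m}\del{j+1}$. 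The main obstacle is not any individual computation but the interlocking of the three parts --- part (b) for $j=m$ depends on part (c), which depends on part (a) --- together with keeping the repeated minimality arguments and the boundary case $m=0$ straight; no circularity arises because part (a) and the device are established outright.
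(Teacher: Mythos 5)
Your proof is correct and follows essentially the same route as the paper's: rule out $j<m$ via the cycle equations and minimality of $m$, show $m$ reduces $c_{j+1}$ with $\dgn(c_{j+1})=r$, upgrade to proper reduction by deriving a contradiction with minimality of $m$ via Lemma \ref{prlem}, and then chase the simplicial identities for (b) and (c). The only differences are cosmetic: you abstract the recurring minimality argument into an explicit ``device'' and you handle the $m=0$ boundary case explicitly, both of which the paper leaves implicit.
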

\begin{proof} (a). If $j$ reduces $c_m$ and $j < m$, then $\dgn(c_j \del{m-1} ) = \dgn(c_m \del{j}) = r-1$. By minimality of $r$, $\dgn(c_j) =r$, contradicting minimality of $m$. Hence $j \geq m$. By the cycle equations,  $\dgn(c_{j+1}\del{m}) = \dgn( c_m\del{j})= r-1$; but $\dgn(c_{j+1}) \geq r$, which means that $m$ reduces $c_{j+1}$. If $m$ does not properly reduce $c_{j+1}$ then $m-1$ does, in which case $\dgn(c_{m-1} \del{j}) = \dgn( c_{j+1} \del{m-1}) = r-1$, contradicting minimality of $m$. So $m$ properly reduces $c_{j+1}$. 

(c). If $m$ reduces $c_m$, then $m$ properly reduces $c_m$ because $m-1$ cannot. Taking $j=m$ in part (a), we see that $m$ properly reduces $c_{m+1}$. By the cycle equations and the fact that $m$ properly reduces $c_m$ and $c_{m+1}$, we deduce that \[ c_m = c_m \del{m} \sig{m} = c_{m+1} \del{m} \sig{m} = c_{m+1}.\]

(b). By part (a), $m$ properly reduces $c_{j+1}$. If $j>m$, \[ c_{j+1} = c_{j+1} \del{m}\sig{m} = c_m \del{j} \sig{m} = c_m \sig{m} \del{j+1}\] by (\ref{cycleeq}) then (\ref{mixeq}). If $j=m$, $\sig{m}\del{m+1}=\text{id}$ and the conclusion follows immediately from (c).
\end{proof}

\begin{lem}\label{techlem2} Let $c$, $X$, $r$, $m$ be as above. Then there are least $r+2$ faces $c_u$ of $c$ such that $\dgn(c_u)=r$ and $c_u = c_m \sig{m}\del{u}$.
\end{lem}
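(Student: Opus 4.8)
The goal is to produce $r+2$ faces $c_u$ with $\dgn(c_u)=r$ that are all of the form $c_m\sig{m}\del{u}$. The natural source of such faces is Lemma \ref{techlem1}(b), which already gives one such face $c_{j+1}$ for every $j$ that reduces $c_m$, together with the face $c_m$ itself (via part (c), when $m$ reduces $c_m$). So the first thing to check is a careful count: by Remark \ref{prrmk} the simplex $c_m$ is properly reduced by exactly $r$ ordinals, and by Lemma \ref{oprlem} each such proper reduction $i$ forces $i+1$ to reduce $c_m$ as well, so $c_m$ is reduced by at least $r$ ordinals. Each such reducing ordinal $j$ yields a face $c_{j+1}$ with $\dgn(c_{j+1})=r$ (the computation in the proof of \ref{techlem1}(a) shows $m$ reduces $c_{j+1}$, so $\dgn(c_{j+1})\le r$, and minimality of $r$ gives equality) and with $c_{j+1}=c_m\sig{m}\del{j+1}$ by \ref{techlem1}(b).

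**The counting step.** I would split into two cases according to whether $m$ reduces $c_m$. If $m$ does \emph{not} reduce $c_m$, then all of the (at least) $r$ reducing ordinals $j$ of $c_m$ satisfy $j\ge m$ by \ref{techlem1}(a), and in fact $j>m$; the resulting indices $j+1$ are distinct and all $\ge m+2$, giving $r$ faces $c_{j+1}$. To these I add two more: the face $c_m$ itself — but wait, here I need $c_m$ to be expressible as $c_m\sig{m}\del{m}$, which holds iff $m$ properly reduces $c_m$, contradicting the case assumption — so instead I should argue more carefully. The cleaner approach: when $m$ does not reduce $c_m$, the $r$ proper reductions $i$ of $c_m$ all have $i+1$ reducing $c_m$ with $i+1>m$, but also each $i$ itself may or may not reduce; the safe count is to take the set $S=\{j : j\text{ reduces }c_m\}$, note $|S|\ge r$, and observe that the map $j\mapsto j+1$ is injective, producing $|S|$ faces $c_{j+1}=c_m\sig{m}\del{j+1}$ of degeneracy $r$. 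When $m$ reduces $c_m$, part (c) gives additionally $c_m=c_{m+1}$ and $m$ properly reduces $c_m$, so $c_m=c_m\sig{m}\del{m}$ is an $(r+1)$-st face of the right form with $\dgn(c_m)=r$; combined with $|S|\ge r$ this gives $r+1$. To reach $r+2$ I expect one needs the extra face coming from $m+1$: since $m$ properly reduces $c_m$, Lemma \ref{oprlem} gives that $m+1$ reduces $c_m$, so $m+1\in S$ and $c_{m+2}=c_m\sig{m}\del{m+2}$; but I must make sure I have not already counted $c_m$ as one of the $c_{j+1}$'s. The genuinely careful bookkeeping — showing the list $\{c_m\}\cup\{c_{j+1}:j\in S\}$ has $r+2$ \emph{distinct} indices landing in $[k]$, using $j\ge m$ and handling whether $m\in S$ — is the main obstacle.

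**Finishing.** Once the distinctness and membership-in-$[k]$ of the $r+2$ indices is established, the statement that each such $c_u$ has $\dgn(c_u)=r$ and $c_u=c_m\sig{m}\del{u}$ is immediate from Lemma \ref{techlem1}(b) and (c), so no further work is needed. I would therefore organize the proof as: (1) recall $|S|\ge r$ from Remark \ref{prrmk} and Lemma \ref{oprlem}; (2) apply \ref{techlem1}(b) to get a face $c_{j+1}=c_m\sig{m}\del{j+1}$ of degeneracy $r$ for each $j\in S$; (3) case on whether $m$ reduces $c_m$, using \ref{techlem1}(c) to adjoin $c_m$ (and possibly noting $c_m=c_{m+1}$) in the reducing case and finding one more reducing ordinal or a separate degeneracy-$r$ face in the non-reducing case; (4) verify the resulting index set has at least $r+2$ elements, all in $\{0,\dots,k\}$, by using $j\ge m$ throughout. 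The only real subtlety is step (4) — making the tally honest — since the rest is bookkeeping on top of Lemma \ref{techlem1}.
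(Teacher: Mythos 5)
Your overall strategy—produce the faces $c_{j+1}$ for $j$ reducing $c_m$, then adjoin $c_m$ itself, and count—is the same as the paper's, but your execution has one real error and one unresolved gap, both in the counting.

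The error is the parenthetical ``but wait, here I need $c_m$ to be expressible as $c_m\sig{m}\del{m}$, which holds iff $m$ properly reduces $c_m$.'' That equivalence is false: $\sig{m}\del{m}=\id$ by the simplicial identity (\ref{mixeq}), so $c_m=c_m\sig{m}\del{m}$ holds unconditionally, for any $c_m$. The face $c_m$ is therefore always available as one of the $r+2$, with no hypothesis on whether $m$ reduces $c_m$, and the detour you take to work around this (invoking Lemma \ref{techlem1}(c)) is unnecessary.

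The gap is that you never actually produce an $(r+1)$-st reducing ordinal. You observe $|S|\ge r$ (where $S$ is the set of ordinals reducing $c_m$) and gesture at $m+1$ as an extra element of $S$, but $m+1$ may already lie in $M$ (the set of proper reductions), so this does not push the count past $r$. The paper closes this by taking $l$ to be one more than the largest element of $M$: then $l-1\in M$, so $l$ reduces $c_m$ by Lemma \ref{oprlem}, and $l>\max M$ forces $l\notin M$, giving $|S|\ge|M\cup\{l\}|=r+1$. A dimension check ($M\subseteq\{m,\dots,k-2\}$) puts $l\le k-1$, so $c_{l+1}$ is a genuine face. Together with $c_m$ and the $r$ faces $c_{j+1}$ for $j\in M$—all distinct since $j+1\ge m+1>m$ and $l+1\notin\{j+1:j\in M\}$—this yields the required $r+2$ faces, each of the form $c_m\sig{m}\del{u}$ by Lemma \ref{techlem1}(b) (or trivially, for $u=m$). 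You correctly flagged the tally as ``the main obstacle,'' but the obstacle is resolved by looking at the \emph{top} of $M$, not at $m$ or $m+1$.
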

\begin{proof} Let $\e$ be the unique epimorphism the Eilenberg-Zilber decomposition of $c_m$, and let $$M = \{ j \mid \e(j) = \e(j+1)\}$$ be the set of indices $m \leq j \leq k-2$ that properly reduce $c_m$; the lower bound is from Lemma \ref{techlem1} and the upper bound is by a dimension argument. By Remark \ref{prrmk}, $|M|=r$. By Lemma \ref{oprlem}, if $l \in [k-1]$ is the smallest ordinal not in $M$, then $l$ reduces $c_m$ also.

Let $j \in M \cup \{l\}$. Then $$\dgn(c_{j+1}\del{m}) = \dgn(c_m\del{j}) = r-1,$$ which implies that $\dgn(c_{j+1}) = r$. This gives us a set of $r+2$ elements of degeneracy $r$ \[\{c_m\} \cup \{ c_{j+1} \mid j \in M\} \cup \{c_{l+1}\}.\] Each of these faces also satisfies $c_u = c_m \sig{m} \del{u}$; the first one trivially by (\ref{mixeq}) and the remaining $r+1$ by Lemma \ref{techlem1} (b). 
\end{proof}

In Proposition \ref{techprop}, we will show that a sufficiently degenerate sphere $c$ is filled by $c_m\sig{m}$ where $m$ is the smallest ordinal corresponding to a face of minimal degeneracy. In order for $c_m\sig{m}$ to fill the sphere $c$, we must have $$c_{m+1} = c_m\sig{m}\del{m+1}=c_m.$$ The hardest part of the proof will be verifying this condition, so we tackle this first.

\begin{lem}\label{mplus1lem} Let $X$ be a simplicial set which is $n$-skeletal, and let $c$ be a $k$-sphere in $X$ whose faces $c_0,\ldots,c_k$ all have degeneracy at least $2$. Let $r$ be the minimal degeneracy of the faces $c_i$, and let $m$ be the smallest ordinal with $\dgn(c_m)=r$. If any of the following hold, then $c_m=c_{m+1}$.\\ \indent \emph{(a)} $m$ reduces $c_m$. \\ \indent \emph{(b)} some $j$ properly reduces both $c_m$ and $c_{m+1}$. \\ 
\indent \emph{(c)} $m$ reduces $c_{m+1}$ and $\dgn(c_{m+1})=r$. \\ 
\indent \emph{(d)} $k < 2r+3$.\\
In particular, if $k < 2r+3$, then $c_m = c_{m+1}$.
\end{lem}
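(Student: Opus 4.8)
The plan is to verify conditions (a), (b) and (c) by short manipulations with the simplicial and cycle identities, and then to obtain (d)---and with it the closing sentence---from these three by a cardinality argument.

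Condition (a) is precisely the hypothesis of Lemma~\ref{techlem1}(c), which already concludes $c_m = c_{m+1}$. Condition (c) reduces to (a): the cycle equation~(\ref{cycleeq}) gives $c_{m+1}\del{m} = c_m\del{m}$, so if $m$ reduces $c_{m+1}$ and $\dgn(c_{m+1}) = r$ then $\dgn(c_m\del{m}) = \dgn(c_{m+1}\del{m}) = r-1 = \dgn(c_m)-1$, i.e.\ $m$ reduces $c_m$. For (b), suppose $j$ properly reduces both $c_m$ and $c_{m+1}$. Since $j$ reduces $c_m$, Lemma~\ref{techlem1}(a) supplies $j \geq m$ together with the crucial fact that $m$ properly reduces $c_{j+1}$; the case $j = m$ is handled by (a), so assume $j > m$. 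Then $c_{j+1} = c_{j+1}\del{m}\sig{m}$, whence $c_{j+1}\del{m+1} = c_{j+1}\del{m}\sig{m}\del{m+1} = c_{j+1}\del{m}$ by~(\ref{mixeq}); combining this with the cycle equations $c_m\del{j} = c_{j+1}\del{m}$ and $c_{m+1}\del{j} = c_{j+1}\del{m+1}$ gives $c_m\del{j} = c_{m+1}\del{j}$. Applying $\sig{j}$ and using that $j$ properly reduces each of $c_m$ and $c_{m+1}$ then yields $c_m = c_m\del{j}\sig{j} = c_{m+1}\del{j}\sig{j} = c_{m+1}$.

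For (d), assume $k < 2r+3$ and that (a), (b) and (c) all fail; the goal is a contradiction. Since (a) fails, Lemma~\ref{techlem1}(a) and Remark~\ref{prrmk} show that the proper reducers of $c_m$ form an $r$-element set $M \subseteq \{m+1, \dots, k-2\}$; in particular $\{0, \dots, m\}$ is disjoint from $M$ and $m \leq k-2-r \leq r$. Write $s := \dgn(c_{m+1}) \geq r$ and let $M'$ be its set of proper reducers, so $|M'| = s$. Because (b) fails, $M \cap M' = \emptyset$, so $r + s = |M \cup M'| \leq k-1 \leq 2r+1$, forcing $s \in \{r, r+1\}$ and leaving $M \cup M'$ missing at most one element of $\{0, \dots, k-2\}$. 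As $\{0, \dots, m\}$ avoids $M$, at least $m$ of these $m+1$ ordinals lie in $M'$; separating the cases $s = r$ and $s = r+1$ and invoking the failure of (c) (which forbids $m \in M'$ when $s = r$) then pins the configuration down to a few extremal possibilities---for instance, once $m \in M'$ the cycle equation forces $c_{m+1} = c_m\del{m}\sig{m}$. Each such possibility is eliminated by pushing the resulting rigid description of $c_m$ and $c_{m+1}$ back through the simplicial identities~(\ref{deleq})--(\ref{mixeq}) and Lemma~\ref{techlem1}, either contradicting the minimality of $m$ or exhibiting a common proper reducer and hence falling under (b). The concluding ``in particular'' is just (d) restated.

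I expect (d) to be the main obstacle. Parts (a)--(c) are brief identity chases, but (d) requires coordinating three constraints at once---the position of $M$ inside $\{m+1, \dots, k-2\}$, the minimality of $m$, and the dimension bound $k \leq 2r+2$---and the genuinely delicate point is isolating and disposing of the handful of boundary configurations in which the crude count does not already force one of (a)--(c).
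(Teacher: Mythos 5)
Your proofs of parts (a), (b), and (c) are correct. Part (a) is, as you say, a direct citation of Lemma~\ref{techlem1}(c), and part (c) reduces to (a) exactly as the paper does. Your argument for (b) is a genuinely different and arguably cleaner route than the paper's: the paper runs a single six-step chain $c_{m+1} = c_{m+1}\del{j}\sig{j} = \cdots = c_m$ threading through Lemma~\ref{techlem1}(b) and several simplicial identities, whereas you instead show directly that $c_m\del{j} = c_{m+1}\del{j}$ (by comparing both sides of the cycle equation via $c_{j+1}\del{m} = c_{j+1}\del{m+1}$) and then apply $\sig{j}$ using the hypothesis that $j$ properly reduces both. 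That is valid and a bit more transparent.

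Part (d) is where there is a genuine gap, and you yourself flag it: the case analysis you gesture at is never carried out, and I do not believe it closes in the form you describe. Your counting correctly shows that under $k<2r+3$ the sets $M$ and $M'$ of \emph{proper} reducers nearly exhaust $\{0,\ldots,k-2\}$, but (b) failing only tells you $M\cap M'=\emptyset$; a pure cardinality argument plus ``push it through the simplicial identities'' will not by itself manufacture a contradiction, because nothing in the count forces the degeneracy sequences of $c_m$ and $c_{m+1}$ to be related. The paper's proof of (d) contains a decisive extra idea that your sketch lacks: it first uses Lemma~\ref{oprlem} to produce, not a common \emph{proper} reducer, but merely a common \emph{reducer} $j>m$ of both $c_m$ and $c_{m+1}$ (the count $r+1$ reducers on each side avoiding $m$, inside $k-1\leq 2r+1$ slots, guarantees this); then it computes $c_m\del{j}=c_{m+1}\del{j}$ via the cycle equations and Lemma~\ref{techlem1}(b); and finally it applies the \emph{uniqueness} clause of the Eilenberg--Zilber Lemma~\ref{ezlem} to the equal simplices $c_m\del{j}$ and $c_{m+1}\del{j}$ to conclude that the two degeneracy sequences coincide except in one position, hence (since $r>1$) share an elementary degeneracy, which is exactly a common proper reducer, and also that $\dgn(c_{m+1})=r$. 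Only then is (b) invoked. Without that Eilenberg--Zilber uniqueness step, which is nowhere in your outline, there is no mechanism for converting ``both sets of proper reducers are large'' into ``some ordinal properly reduces both,'' and the argument does not close. You should either adopt the paper's strategy at that point, or supply a genuinely different mechanism; the vague appeal to eliminating ``a few extremal possibilities'' is not a proof.
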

\begin{proof}
(a). This is shown in Lemma \ref{techlem1} (c).

(b). By Lemma \ref{techlem1} (a), $j \geq m$ and by part (a) just completed, it suffices to assume that $j > m$. Then \begin{align*} c_{m+1} &= c_{m+1}\del{j}\sig{j} & & \mbox{$j$ properly reduces $c_{m+1}$} \\ &= c_{j+1} \del{m+1} \sig{j} & & \mbox{cycle equations $(m<j)$} \\ &= c_m\sig{m}\del{j+1}\del{m+1}\sig{j} & & \mbox{Lemma \ref{techlem1} (b)} \\&= c_m\del{j}\sig{j}\sig{m}\del{m+1} & & \mbox{(\ref{mixeq}) then (\ref{mixeq}) then (\ref{sigeq}) ($m < j$)} \\ &=c_m \sig{m}\del{m+1} & & \mbox{$j$ properly reduces $c_m$} \\ &=c_m & & \mbox{(\ref{mixeq})} 
\end{align*}

(c). By the cycle equations $$\dgn(c_m\del{m}) = \dgn(c_{m+1}\del{m}) = r-1$$ so $m$ reduces $c_m$. Apply part (a).

(d). In light of (a), we assume that $m$ does not reduce $c_m$. In light of (c), we assume that either $\dgn(c_{m+1}) >r$ (which will eventually lead to a contradiction) or that $m$ does not reduce $c_{m+1}$. By Lemma \ref{oprlem}, there are at least $r+1$ ordinals $0 \leq j \leq k-1$, $j \neq m$, that reduce $c_m$. By the assumptions just made, there are likewise at least $r+1$ ordinals $0 \leq j \leq k-1$, $j \neq m$, that reduce $c_{m+1}$. So if $k-1 < 2r+2$, then there is some $j$ that reduces both $c_m$ and $c_{m+1}$. Then
\begin{align*}  c_{m+1}\del{j} &=c_{j+1}\del{m+1} & & \mbox{cycle equations ($m < j$)} \\
&= c_m\sig{m}\del{j+1}\del{m+1} & & \mbox{Lemma \ref{techlem1} (b)} \\
&= c_m \sig{m} \del{m+1}\del{j} & & \mbox{simplicial identity}\\
&= c_m \del{j}
\end{align*}

Let $y_m\sig{j_1}\ldots\sig{j_r}$ be the Eilenberg-Zilber decomposition of $c_m$. Because $j$ reduces $c_m$, at least one of $\sig{j}$ or $\sig{j-1}$ appears in this sequence; let $s$ be the index such that $j_s=j$ if possible and $j_s=j-1$ otherwise. By repeated application of (\ref{mixeq}), the Eilenberg-Zilber decomposition for $c_m\del{j}$ is $$y_m\sig{j_1}\cdots \sig{j_{s-1}}\widehat{\sig{j_s}} \sig{j_{s+1}-1}\cdots\sig{j_r}.$$  Similarly, let $c_{m+1}= y_{m+1}\sig{i_1}\ldots\sig{i_{r'}}$ and let $i_t=j$ if possible and take $i_t=j-1$ otherwise. The Eilenberg-Zilber decomposition of $c_{m+1}\del{j}$ is $$y_{m+1}\sig{i_1}\cdots \sig{i_{t-1}}\widehat{\sig{i_t}} \sig{i_{t+1}-1}\cdots \sig{i_{r'}-1}$$ and by the above computation, these must be equal. Using the uniqueness statement, it follows that $r'=r$ (which means that $\dgn(c_{m+1})=r$), $s=t$, $y_m=y_{m+1}$, and the sequences of elementary degeneracies (excluding $\sig{j_s}$ and $\sig{i_t}$) agree. In particular, because $r>1$, we can apply (b) to conclude that $c_m=c_{m+1}$.
\end{proof}

Finally, we can prove Proposition \ref{techprop}. Unfortunately, despite the lengthy preparation, this will still be considerably harder than it was to prove the analogous result for the topos of cubical sets.

\begin{proof}[Proof of Proposition \ref{techprop}] We must show that $c_u = c_m \sig{m} \del{u}$ for all $0 \leq u \leq k$. Let $M$ be the set of indices which properly reduce $m$, and let $l$ be one greater than the largest element of $M$. As we saw in the proof of Lemma \ref{techlem2}, each $j \in M$ satisfies $m \leq j \leq k-2$ and $|M|=r$.

In Lemma \ref{techlem2}, we showed that $c_u = c_m \sig{m} \del{u}$ for the $r+2$ element set $$\{ m\} \cup \{ j+1 \mid j \in M\} \cup \{l+1\}.$$ In Lemma \ref{mplus1lem}, we proved the difficult case $u=m+1$. We divide the remaining proof into three parts, each with a few cases.

Part I: ($c_u = c_m \sig{m} \del{u}$ for all $u < m$). If $m=0$ this case is vacuous, so we may assume $m>0$. If $u<m$ then $c_{u}$ must be properly reduced by at least $r+1$ ordinals in $[k-2]$ since $c_{m}$ is of 
minimal degeneracy and the first ordinal of such degeneracy.  If $k-1 < 2r+2$ then $c_{u}$ must be properly reduced by at least one ordinal in the set $\{m-1\} \cup M$. Call this element $p$. Note that $u<m$ implies $u \leq p$ also. By the hypothesis, (\ref{cycleeq}), and (\ref{mixeq}), \begin{equation}\label{quickthing}c_u = c_u \del{p} \sig{p} = c_{p+1} \del{u} \sig{p} = c_{p+1} \sig{p+1} \del{u}.\end{equation}

If $p=m-1$, this is what we intended to show.

If $p=m$, then by part (c) of Lemma \ref{techlem1}, $c_m=c_{m+1}$ and by hypothesis $m$ properly reduces $c_m$. We use these facts to compute 
\begin{align*} 
c_u &= c_{m+1} \sig{m+1} \del{u} & & (\ref{quickthing}) \\ &= c_m \sig{m+1} \del{u} & & \mbox{Lemma \ref{techlem1} (c)} \\ &= c_m \del{m} \sig{m} \sig{m+1} \del{u} & & \mbox{$m$ properly reduces $c_m$} \\ &= c_m \del{m} \sig{m} \sig{m} \del{u} & & (\ref{sigeq}) \\ &= c_m \sig{m} \del{u} & & \mbox{$m$ properly reduces $c_m$} \end{align*} as desired. 

If $p>m$,\begin{align*}
c_{u} & =  c_{p+1}\sig{p+1}\del{u}  & &(\ref{quickthing}) \\
     & =  c_{m}\sig{m}\del{p+1}\sig{p+1}\del{u}  && \mbox{Lemma \ref{techlem1} (b)} \\   & =  c_{m}\del{p}\sig{p}\sig{m}\del{u} && \mbox{(\ref{mixeq}) then (\ref{sigeq}) ($m<p$)} \\
      & =  c_{m}\sig{m}\del{u}   && p\ \mbox{properly reduces}\ c_m \end{align*}
as desired.

Part II: ($c_u=c_m\sig{m}\del{u}$ for $u \geq m$ with $\dgn(u)>r$). By Lemmas \ref{techlem2} and \ref{mplus1lem}, $\dgn(u)>r$ implies that $u \not\in \{m,m+1 \} \cup \{j+1 \mid j \in M \} \cup \{l+1\}$. Let \[K= \{m\} \cup \{j+1 \mid j \in M, j+1<u \} \cup \{j \mid j \in M, j+1 > u \}.\] Because $M \subset \{m, \ldots, k-2\}$, $K \subset \{m, \ldots , k-1\}$. In fact, we can deduce that $k-1 \notin K$: $k-1 \in K$ is only possible if $u = k$ is properly reduced by $k-2$, in which case $u=l+1$, contradicting the above.

Because $u-1 \notin M$ and $u>m$, $|K|= r+1$.  Because $\dgn(c_u) > r$, there are at least $r+1$ elements of $[k-2]$ that properly reduce $c_u$. If $k-1 < 2r+2$, then there is some $p \in K$ that properly reduces $c_u$. We will use this $p$ to finish the proof for this case.

Case 1: ($p=m$).  We have \[ c_u = c_u \del{m} \sig{m} = c_m \del{u-1} \sig{m} = c_m \sig{m} \del{u} \] by (\ref{cycleeq}), (\ref{mixeq}), and the fact that we may take $u > m+1$. This is what we wanted. 

Case 2: ($m < p$, $u=p+1$). Inspecting the definition of $K$, we see that $p-1$ properly reduces $c_m$. By Lemma \ref{oprlem}, it follows that $p$ reduces $c_m$ so by Lemma \ref{techlem1}, $c_u = c_m\sig{m}\del{u}$ as desired.

Case 3: ($m < p$, $u>p+1$). As above, $u > p$ and $p \in K$ implies that $p-1 \in M$, which says that $p-1$ properly reduces $c_m$. We compute
\begin{align}
c_{u} & =  c_{u}\del{p}\sig{p} & & \makebox{$p$ properly reduces $c_{u}$}\notag \\
	& = c_{p}\del{u-1}\sig{p} & &  \makebox{cycle equations $(p<u)$} \notag \\
	& = c_{m}\sig{m}\del{p}\del{u-1}\sig{p} & & \makebox{Lemma \ref{techlem1} (b)} \notag \\
	& = c_{m}\sig{m}\del{p}\sig{p}\del{u} & & \makebox{simplicial identity ($p+1 < u$)} \label{thing} \\ \intertext{
If $p>m+1$, it follows that} 
c_{u} & =  c_{m}\del{p-1}\sig{p-1}\sig{m}\del{u} && \makebox{(\ref{mixeq}) then (\ref{sigeq})} \notag\\
	& =  c_{m}\sig{m}\del{u} && \makebox{$p-1$ properly reduces $c_{m}$} \notag \\ \intertext{as desired. If $p=m+1$, (\ref{thing}) simplifies to $c_u = c_m \sig{m+1} \del{u}$. We saw above that $p-1=m$ properly reduces $c_m$. It follows that} c_u &= c_m \sig{m+1} \del{u} \notag\\ &= c_m \del{m} \sig{m} \sig{m+1} \del{u} & & \mbox{$m$ properly reduces $c_m$}\notag \\ &= c_m \del{m} \sig{m} \sig{m} \del{u} & & \mbox{simplicial identity (\ref{sigeq})} \notag\\ &= c_m \sig{m} \del{u} && \mbox{$m$ properly reduces $c_m$}\notag\end{align} completing this case.

Case 4: ($m< u \leq p$). If $p \in K$ and $p \geq u$, then $p \in M$. This says that $p$ properly reduces both $c_u$ and $c_m$. We compute
\begin{align*}
c_{u} & = c_{u}\del{p}\sig{p} && \mbox{$p$ properly reduces $c_{u}$} \\
	& =  c_{p+1}\del{u}\sig{p} && \mbox{cycle equations ($u \leq p$)} \\
	& =  c_{m}\sig{m}\del{p+1}\del{u}\sig{p} && \mbox{Lemma \ref{techlem1} (b)} \\
	& =  c_{m}\del{p}\sig{p}\sig{m}\del{u} && \mbox{(\ref{mixeq}) then (\ref{mixeq}) then (\ref{sigeq})} \\
 	& =  c_{m}\sig{m}\del{u} && \mbox{$p$ properly reduces $c_{m}$} 
\end{align*}
 which is what we wanted to show.

Part III: ($c_u = c_m \sig{m} \del{u}$ for $u > m+1$ with $\dgn(u)=r$, not already covered by \ref{techlem2}). It remains to consider $u \not\in \{m,m+1 \} \cup \{i+1 \mid i \in M \} \cup \{l+1\}$. We use the set $K$ defined in Part II.

Because $\dgn(c_u) =r $, there are $r$ elements of $[k-2]$ that properly reduce $c_u$. Unless $c_u$ is properly reduced by the $r$ element set $\{k-1-r,\ldots, k-2\}$, Lemma \ref{oprlem} implies that there are $r+1$ elements of $[k-2]$ that reduce $c_u$. In this case, $k-1 < 2r+2$ implies that there is some $p \in K$ that reduces $c_u$, though not necessarily properly.

If $c_u$ is properly reduced by the set $\{k-1-r,\ldots, k-2\}$ and further if none of these ordinals lie in $K$, we must have $K \subset \{m,\ldots, k-2-r\}$, which necessitates $r \leq k-2-r$. We've assumed $k < 2r+3$, so the first inequality is an equality and $K = \{0,\ldots, r\}$, and hence $m=0$. The elements of $K$ all reduce $c_m$, so by Lemma \ref{techlem1} (b), we may assume $u > r+1$. It follows that $M = \{0,\ldots,m-1\}$ and $c_u$ is properly reduced by $r+1$, which we take for $p$ in this ``pathological'' case.

We will use the chosen $p$, however it was obtained, to complete the proof.

Case 1: ($p < u$). By the above, either $p \in K$ or $p$ is 2 greater than the maximal element of $M$. However we have chosen $p$, Lemma \ref{techlem2} applies.  Using the cycle equations, $$\dgn(c_u \del{p} ) = \dgn(c_p \del{u-1}) = r-1.$$ If $p =m$, this says that $u-1$ reduces $c_m$, so we're done by Lemma \ref{techlem1} (b). So we may suppose that $p > m$, in which case $u > m+1$. Then
\begin{align*} c_p\del{u-1} &= c_m \sig{m} \del{p}\del{u-1} & & \mbox{Lemma \ref{techlem2}} \\
&= c_m \del{u-1} \sig{m} \del{p} & & \mbox{(\ref{deleq}) then (\ref{mixeq}) $(m+1< u)$}
\end{align*}
The degeneracy of the left hand side is $r-1$ by the above calculation; hence, $u-1$ reduces $c_m$ by Lemma \ref{dgnlem}. We apply Lemma \ref{techlem1} (b) to achieve the desired result.

Case 2: ($p \geq u$). Note that $u > m+1$, so $p > m$ in this case. The inequality $p \geq u$ excludes the ``pathological'' case and implies that $p \in M$. Because $u > m+1$
\begin{align*}  c_{u}\del{p} &=c_{p+1}\del{u} & & \mbox{cycle equations} \\
&= c_m\sig{m}\del{p+1}\del{u} & & \mbox{Lemma \ref{techlem1} (b)} \\
&= c_m \del{u-1}\sig{m}\del{p} & & \mbox{(\ref{deleq}) then (\ref{mixeq}) ($m+1 < u$)}
\end{align*}
By hypothesis, $\dgn(c_u\del{p})= r-1$, so by Lemma \ref{dgnlem}, $u-1$ reduces $c_m$. Again apply Lemma \ref{techlem1} (b) to achieve the desired result.

Combining these (many) cases, we have shown that if $k<2r+3$ then $c_{u}=c_{m}\sig{m}\del{u}$ for all $u=0,1, \ldots ,k$. Hence $\sig{m}\del{u}$ is a filler for the $k$-sphere $c$ in $X$.
\end{proof}

In order to use Proposition \ref{techprop} to prove that $n$-skeletal implies $(2n-1)$-coskeletal, we must also show that the filler it constructs for high dimensional spheres is unique. This follows from the following lemma, which states that degenerate simplices are uniquely determined by their boundaries.

\begin{lem}\label{dgnbdrylem}
If $x$ and $y$ are degenerate simplices with the same faces, i.e., if $x\del{i}=y\del{i}$ for all $i$, then $x=y$.
\end{lem}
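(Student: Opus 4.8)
The plan is to mimic, nearly verbatim, the proof of the cubical analogue (Lemma \ref{cubeuniquelem}), since the structure of the argument transfers. Suppose $x$ and $y$ are degenerate $k$-simplices with $x\del{i} = y\del{i}$ for all $0 \le i \le k$. Since $x$ and $y$ are degenerate, by Remark \ref{prrmk} there is some index $i$ that properly reduces $x$ and some index $j$ that properly reduces $y$. If $i = j$ the two simplices agree: $x = x\del{i}\sig{i} = y\del{i}\sig{i} = y$. So assume $i \ne j$, and without loss of generality $i < j$.

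The key step is to show that $j$ reduces $x$ as well (not necessarily properly), after which we are essentially done. Using that $j$ properly reduces $y$, that $x$ and $y$ share all faces, and the simplicial identity (\ref{mixeq}) to commute $\del{i}$ past $\sig{j}$ (valid since $i < j$), I would compute
\[ x\del{j}\sig{j-1}\ \text{or}\ x\del{j}\sig{j} \]
back to $x$ along the lines of
\[ x = x\del{i}\sig{i} = y\del{i}\sig{i} = y\del{j}\sig{j}\del{i}\sig{i}, \]
then push $\del{i}$ and $\sig{i}$ inward past $\del{j},\sig{j}$ using (\ref{mixeq}) (and being careful about whether $i$ or $i-1$ appears after commuting) to recover an expression of the form $x\del{j}\sig{j-1}\cdots$ or $x\del{j}\sig{j}\cdots$ equal to $x$; Lemma \ref{tfaelem} then certifies that $j$ reduces $x$. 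One must check the index bookkeeping in (\ref{mixeq}) carefully here — this is the main obstacle, since unlike the cubical case the two relevant simplicial identities ($i=j$ and $i=j+1$) and the shift between $\sig{i}$ and $\sig{i-1}$ make the case analysis slightly fussier. It may be cleanest to instead work with the Eilenberg-Zilber decompositions: write $x = y_x\e_x$, $y = y_y\e_y$ with $y_x, y_y$ non-degenerate; the face $\del{i}$ removes the section at the $i$-th spot, and matching the shared faces forces the epimorphisms and non-degenerate parts to coincide.

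Once $j$ reduces both $x$ (just shown) and $y$ (by hypothesis, indeed properly), the conclusion follows from the simplicial analogue of Lemma \ref{tricklem}: two simplices reduced by the same index $j$ with a common $j$-th face must be equal, since each is the appropriate degenerate copy of that face. If such a lemma is not already recorded I would insert its one-line proof inline: if $j$ reduces $x$, then $x = x\del{j}\sig{j}$ or $x = x\del{j}\sig{j-1}$, and similarly for $y$; a short check (distinguishing the "proper" and "improper" sub-cases via Lemma \ref{prlem}, which relates reduction by $j$ to proper reduction by $j$ or $j-1$) shows that the shared value of the $j$-th face pins down $x = y$. Assembling the pieces gives $x = y$ in all cases.
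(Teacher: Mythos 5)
Your overall plan mirrors the cubical proof, but the last step leans on a nonexistent lemma. You invoke a ``simplicial analogue of Lemma \ref{tricklem}'': two simplices reduced by the same index $j$ with a common $j$-th face must be equal. This is false. Take any non-degenerate simplex $z$ and set $x = z\sig{j}$, $y = z\sig{j-1}$. By Example \ref{prex}, $j$ properly reduces $x$ and $j-1$ properly reduces $y$, so by Lemma \ref{oprlem} $j$ reduces $y$ as well; both have $j$-th face $z$, since $\sig{j}\del{j} = \sig{j-1}\del{j} = \mathrm{id}$; yet $x \neq y$ because $z$ is non-degenerate and the Eilenberg--Zilber decompositions $z\sig{j}$ and $z\sig{j-1}$ differ. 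This is precisely the ambiguity you flag --- a simplex reduced by $j$ may be $(x\del{j})\sig{j}$ or $(x\del{j})\sig{j-1}$, and the $j$-th face alone cannot tell them apart --- but you dismiss it as a ``short check'' rather than recognizing it as the essential difference from cubes, where Lemma \ref{ctfaelem}(vi) gives a single canonical reconstruction from either $i$-th face.

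Your ``key step'' (that $j$ reduces $x$) is fine and, when $j > i+1$, the commutation $(\del{i}\sig{i})(\del{j}\sig{j}) = (\del{j}\sig{j})(\del{i}\sig{i})$ in fact shows $j$ \emph{properly} reduces $x$, so $x = x\del{j}\sig{j} = y\del{j}\sig{j} = y$ and the cubical argument carries over verbatim. This matches the paper's case $|m-n| > 1$. The gap is the adjacent case $j = i+1$: there Lemma \ref{oprlem} only gives that $j$ reduces $x$, possibly improperly, and the $j$-th face does not pin down $x$. The paper resolves it by a direct computation using a \emph{second} shared face: writing $x = x'\sig{m}$, $y = y'\sig{n}$ with $m = n+1$, one first shows $x' = y'$ via the $m$-th face, then shows $y' = x'\del{n}\sig{n}$ via the $n$-th face, and only then concludes $x = x'\sig{n+1} = x'\del{n}\sig{n}\sig{n} = y'\sig{n} = y$ using the identity $\sig{n}\sig{n+1} = \sig{n}\sig{n}$. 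To repair your proof you must supply this adjacent case explicitly; a one-index ``trick lemma'' cannot do the job in $\Delta$.
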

\begin{proof}
Since $x$ and $y$ are degenerate we can write them as $x=x'\sig{m}$ and $y=y'\sig{n}$. If $|m-n| \leq 1$ then without loss of generality $m \geq n$ and 
\[
x'=x'\sig{m}\del{m}=x\del{m}=y\del{m}=
y'\sig{n}\del{m}=y'
\] by (\ref{mixeq}). If $m=n$ it is clear that $x=y$. If $m = n+1$, \[ y' = y \del{n} = x \del{n} = x' \sig{n+1} \del{n} = x' \del{n} \sig{n} \] and \[ x= x' \sig{n+1} = x' \del{n} \sig{n} \sig{n+1} = x' \del{n} \sig{n} \sig{n} = y' \sig{n} = y\] by (\ref{sigeq}).

If $|m-n|>1$, then \begin{equation}\label{mneq}(\del{m}\sig{m})(\del{n}\sig{n}) = (\del{n}\sig{n})(\del{m}\sig{m})\end{equation} by the simplicial identities. By the computation in Example \ref{prex}, $m$ properly reduces $x$ and $n$ properly reduces $y$. In fact, using (\ref{mneq}), the same is true with $m$ and $n$ reversed: \[ x = x \del{m}\sig{m} = y \del{m}\sig{m} = y \del{n}\sig{n}\del{m}\sig{m} = y \del{m}\sig{m} \del{n} \sig{n} = x \del{m}\sig{m} \del{n} \sig{n} = x \del{n}\sig{n}\] and similarly $y = y \del{m}\sig{m}$. It follows that \[x = x \del{n} \sig{n} = y \del{n} \sig{n} = y.\quad\qedhere\]
\end{proof}

Any $0$-skeletal simplicial set is $1$-coskeletal: there exists a path of 1-simplices connecting each pair of vertices in any sphere of dimension $k >1$. It follows that each of the vertices are the same and the sphere can be filled by the unique degenerate $k$-simplex on that vertex.  Any $1$-skeletal simplicial set is $2$-coskeletal: any sphere of dimension $k>2$ contains at most one non-degenerate edge. It follows that the initial $s$ vertices are the same and the final $k+1-s$ vertices are the same. From this point, it is easy to identify the unique non-degenerate filler.

For larger $n$, we use the preceding work to prove our main result.

\begin{thm}\label{ssetupperboundthm}
If a simplicial set is $n$-skeletal with $n>1$, it is $(2n-1)$-coskeletal. Hence, the Aufhebung relation for the topos of simplicial sets is bounded above by $2n-1$. 
\end{thm}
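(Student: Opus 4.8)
The plan is to mimic the proof of Theorem \ref{rcubeupperboundthm} for cubical sets, using Proposition \ref{techprop} in place of Proposition \ref{techcubeprop} and Lemma \ref{dgnbdrylem} in place of Lemma \ref{cubeuniquelem}. We must show that any $k$-sphere $c$ in an $n$-skeletal simplicial set $X$ with $k > 2n-1$ can be filled, and filled uniquely. First I would rewrite the dimension inequality: the faces $c_0,\dots,c_k$ of a $k$-sphere are $(k-1)$-simplices, so since $X$ is $n$-skeletal each has degeneracy at least $k-1-n$; call this number $r_0$, and note $k > 2n-1$ is equivalent to $k < 2(k-1-n)+3 = 2r_0+3$. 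Since $n > 1$ and $k > 2n-1 \geq 3$, we have $r_0 = k-1-n \geq 1$; but to invoke Proposition \ref{techprop} we need the minimal degeneracy $r$ of the faces to be at least $2$.

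The subtle point — and the step I expect to be the main obstacle — is exactly this gap between $r \geq r_0 \geq 1$ and the hypothesis ``$r \geq 2$'' of Proposition \ref{techprop} (equivalently, the ``$r > 1$'' used at the very end of Lemma \ref{mplus1lem}(d) to invoke part (b)). So I would split into cases on $r$. If $r \geq 2$, Proposition \ref{techprop} applies directly: since $k < 2r+3$, the sphere is filled by $c_m\sig{m}$, where $m$ is the smallest index with $\dgn(c_m)=r$. If $r \leq 1$, then $k-1-n \leq r \leq 1$ forces $k \leq n+2$; combined with $k > 2n-1$ this gives $2n-1 < k \leq n+2$, hence $n < 3$, i.e. $n = 2$ and $k \in \{4,5\}$... wait, $k \le n+2 = 4$ and $k > 3$, so $k = 4$, $n = 2$, and $r \le 1$ means every face is a degenerate $3$-simplex with $\dgn \le 1$, hence $\dgn = 1$, so $r = 1$. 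This single residual case $n=2$, $k=4$, $r=1$ must be handled by hand: a $4$-sphere all of whose $3$-dimensional faces are once-degenerate, in a $2$-skeletal simplicial set. I would argue directly that $c_m\sig{m}$ (with $m$ minimal among indices of degeneracy $1$) still fills $c$, re-running the relevant cases of Proposition \ref{techprop}'s proof and checking that the one place ``$r>1$'' was used — Lemma \ref{mplus1lem}(d), to get $c_m = c_{m+1}$ from ``$j$ properly reduces both'' — can instead be obtained here because with $r=1$ each of $c_m, c_{m+1}$ has a single proper reduction, and the uniqueness statement of the Eilenberg--Zilber lemma forces the non-degenerate parts to coincide and the single degeneracy index to coincide, directly yielding $c_m = c_{m+1}$.

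Once existence of a filler $y$ is established in every case (it is always a degenerate simplex, namely $c_m\sig{m}$), uniqueness is immediate: if $y$ and $y'$ both fill the $k$-sphere $c$ then they have the same faces $c_0,\dots,c_k$, and since $k > 2n-1 > n$ both are degenerate (a non-degenerate $k$-simplex would make $X$ not $n$-skeletal), so Lemma \ref{dgnbdrylem} gives $y = y'$. This proves $n$-skeletal implies $(2n-1)$-coskeletal for $n > 1$, and since the coskeletal levels are ordered by $\leq$, this shows the Aufhebung of the level $n$ is at most $2n-1$. The one genuine piece of work is the bookkeeping for the boundary case; everything else is a direct citation of the preceding propositions and lemmas.
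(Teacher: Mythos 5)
Your overall structure coincides with the paper's: rewrite $k > 2n-1$ as $k < 2(k-1-n)+3$, note that the $n$-skeletal hypothesis puts the degeneracy of every face at $\geq k-1-n$, apply Proposition~\ref{techprop} to produce the filler, and apply Lemma~\ref{dgnbdrylem} (every filler of a high-dimensional sphere is degenerate, so determined by its faces) for uniqueness. You also correctly isolate the single residual case, $n=2$, $k=4$, $r=1$, that falls outside the hypothesis ``degeneracy at least $2$'' of Proposition~\ref{techprop}; the paper handles this case the same way, by setting it aside and asserting it can be settled by a direct enumeration of which degenerate $3$-simplices can bound a $4$-sphere, with the details left to the reader.

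Where you go wrong is the attempted fix for this residual case. You claim that with $r=1$, Eilenberg--Zilber uniqueness applied to $c_m\del{j} = c_{m+1}\del{j}$ forces ``the single degeneracy index to coincide'' and hence $c_m = c_{m+1}$. That inference is not valid. Suppose $c_m = y\sig{a}$ and $c_{m+1} = y'\sig{b}$ with $y,y'$ non-degenerate. Equality of $c_m\del{j}$ and $c_{m+1}\del{j}$ for a $j$ reducing both gives $y = y'$, but need not give $a = b$: e.g., $\sig{1}\del{1} = \mathrm{id} = \sig{0}\del{1}$, so $j=1$ reduces both $y\sig{1}$ and $y\sig{0}$ and yields the same face $y$ on both sides, while $y\sig{1} \neq y\sig{0}$. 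This is exactly the situation the paper's Lemma~\ref{mplus1lem}(d) handles by invoking part~(b), which crucially needs a \emph{second} degeneracy index to survive the comparison --- hence the $r>1$ hypothesis. So rerunning the machinery with $r=1$ does not close the gap; you must instead argue, using the remaining cycle equations (not just the one along $\del{j}$), that this mismatched configuration cannot arise, or else carry out the direct case analysis of degenerate $3$-simplices that the paper suggests.
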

\begin{proof} 
Let $X$ be an $n$-skeletal simplicial set and $c$ be a $k$-sphere in $X$ with $k>2n-1$. The case $n=2$ and $k=4$ can be proven by considering which degenerate 3-simplices have faces which satisfy the cycle equations. Such an argument does not require the difficult combinatorics of Proposition \ref{techprop}, and the details are left to the reader.

In general, the inequality $k>2n-1$ can be rewritten as \[ k < 2k-2n+1 = 2(k-1-n)+3.\] The faces of $c$ are $(k-1)$-simplices, which must have degeneracy at least $k-1-n$ which is greater than 1 in all cases which remain, so we may apply Proposition \ref{techprop} to conclude that $c$ has a filler. The filler is necessarily degenerate, so by Lemma \ref{dgnbdrylem} it's unique. This shows that $X$ is $(2n-1)$-coskeletal, as desired. 
\end{proof}

\begin{ex}\label{simpex} Let $X$ be the $n$-skeletal simplicial set, $n \geq 3$, generated by a single vertex $v$, distinct $(n-1)$-simplices $x'$ and $y'$ whose faces are degeneracies at $v$, and two $n$-simplices $x$ and $y$ with $x\del{0} = x'$, $y\del{n} = y'$, and all other faces of $x$ and $y$ degeneracies at $v$. Let $c$ be the simplicial $(2n-1)$-sphere with $$c_0 = \cdots = c_{n-1} = x\sig{0}\ldots \sig{n-3} \quad \text{and} \quad c_n = \cdots = c_{2n-1} = y\sig{n}\cdots \sig{2n-3}.$$ No simplex of $X$ contains both $x'$ and $y'$ as faces; hence, this sphere has no filler.
\end{ex}

\begin{thm}\label{ssetthm}
The Aufhebung relation for the topos of simplicial sets is $2n-1$.
\end{thm}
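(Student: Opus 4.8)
The plan is to derive the theorem from the two results immediately preceding it, exactly as Theorem~\ref{rcubesetthm} was derived in the cubical case. Recall that the Aufhebung of the level $n$ is by definition the least level $k$ such that every $n$-skeletal simplicial set is $k$-coskeletal; since a $k$-coskeletal simplicial set is automatically $k'$-coskeletal for all $k' \geq k$ (there are fewer spheres to fill), the collection of such $k$ is an up-set in $\mathbb{N}$ and hence has a least element as soon as it is non-empty. Theorem~\ref{ssetupperboundthm} puts $2n-1$ in this collection once $n \geq 2$, so all that remains is to show that $2n-2$ is \emph{not} in it: one must exhibit, for each $n \geq 2$, an $n$-skeletal simplicial set carrying a $(2n-1)$-sphere with no filler. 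Then the Aufhebung of the level $n$ is neither $\leq 2n-2$ nor $> 2n-1$, hence equals $2n-1$. (The cases $n = 0$ and $n = 1$ are genuinely exceptional --- the Aufhebung there is $1$ and $2$, not $-1$ and $1$ --- and were disposed of in the discussion preceding Theorem~\ref{ssetupperboundthm}; the formula $2n-1$ is to be read for $n \geq 2$.)

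The witnessing simplicial set is the one in Example~\ref{simpex}: let $X$ be freely generated by a vertex $v$, two distinct $(n-1)$-simplices $x'$, $y'$ all of whose faces are totally degenerate at $v$, and two $n$-simplices $x$, $y$ with $x\del{0} = x'$, $y\del{n} = y'$ and every remaining face of $x$ or $y$ totally degenerate at $v$. Three things must be verified. (i) $X$ is well defined and $n$-skeletal: the imposed face identifications are mutually consistent because every relevant iterated face of $x'$, $y'$, $x$, $y$ other than $x'$ and $y'$ themselves collapses to the (unique, since $[0]$ is terminal in $\Delta$) totally degenerate simplex at $v$, so the non-degenerate simplices of $X$ are precisely $v$, $x'$, $y'$, $x$, $y$, all of dimension $\leq n$. (ii) The prescribed $(2n-2)$-simplices $c_0 = \cdots = c_{n-1} = x\sig{0}\cdots\sig{n-3}$ and $c_n = \cdots = c_{2n-1} = y\sig{n}\cdots\sig{2n-3}$ satisfy the cycle equations~(\ref{cycleeq}): the equations with both indices $\leq n-1$, or both $\geq n$, reduce by~(\ref{mixeq}) and~(\ref{sigeq}) to the fact that within a single degeneracy block any two one-step face operators agree, while the mixed equations hold because the faces involved are all totally degenerate at $v$. (For $n = 2$ the degeneracy strings are empty; there one may instead simply take $X = \partial\Delta^{3}$, which is $2$-skeletal but not $2$-coskeletal.)

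(iii) The sphere $c$ has no filler, and this is the only step demanding any care. A filler would be a $(2n-1)$-simplex $z$ with $z\del{u} = c_u$ for all $u$. Since $n \geq 2$ gives $2n-1 > n$ and $X$ is $n$-skeletal, $z$ is degenerate, say $z = w\e$ with $w$ non-degenerate of dimension $\leq n$ and $\e$ epic. Factoring $\e$ composed with any string of face operators as a monomorphism followed by an epimorphism shows that every non-degenerate iterated face of $z$ is an iterated face of $w$ (cf.\ the Eilenberg--Zilber lemma~\ref{ezlem} and Example~\ref{prex}). Running through the generators, the sets of non-degenerate simplices occurring as iterated faces are $\{v\}$, $\{x',v\}$, $\{y',v\}$, $\{x,x',v\}$, $\{y,y',v\}$ for $v$, $x'$, $y'$, $x$, $y$ respectively --- so no simplex of $X$ has both $x$ and $y$ (equivalently, both $x'$ and $y'$) among its iterated faces. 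But $z\del{0} = c_0$ has $x$ as an iterated face (apply a section of the epimorphism $\sig{0}\cdots\sig{n-3}$) and $z\del{n} = c_n$ has $y$ as an iterated face, so $z$ would have both, a contradiction. Hence $X$ is not $(2n-2)$-coskeletal, and with Theorem~\ref{ssetupperboundthm} we conclude that the Aufhebung of the level $n$ is $2n-1$. I anticipate no serious obstacle: the combinatorial input in (iii) --- tracking when a face of a degenerate simplex fails to be degenerate --- is entirely governed by the machinery already assembled for Proposition~\ref{techprop} and Lemma~\ref{dgnbdrylem}, and the rest is bookkeeping with the simplicial identities.
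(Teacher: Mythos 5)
Your proposal takes exactly the paper's route: combine the upper bound from Theorem~\ref{ssetupperboundthm} with the lower-bound counterexample of Example~\ref{simpex}, whose sphere cannot be filled because no simplex of $X$ has both $x'$ and $y'$ among its iterated faces. You spell out some verifications the paper leaves implicit (well-definedness of $X$, the cycle equations for $c$, the Eilenberg--Zilber factorisation argument in step (iii)), and you usefully patch the $n=2$ case with $\partial\Delta^3$, which the paper's example (stated for $n\geq 3$) silently omits; but the underlying argument is the same.
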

\begin{proof} Immediate from Theorem \ref{ssetupperboundthm} and the preceding example, which shows that an $n$-skeletal simplicial set is not necessarily $(2n-2)$-coskeletal.
\end{proof}

The results of this section can be used to compute a narrow bound on the Aufhebung relation for the topos of cyclic sets. We hope the details of this application will inspire others who are interested in comparing toposes which exhibit an analogous relationship.

Connes' cyclic category $\Lambda$ is a generalised Reedy category of interest to homotopy theorists \cite{connescohomologiecyclique}, \cite{dwyerhopkinskancyclic}, \cite{lodaycyclic}. It bears the following close relationship to $\Delta$: these categories have the same objects and a morphism $[n] \rightarrow [m]$ of $\Lambda$ can be written uniquely as a cyclic automorphism of $[n]$ followed by an arrow $[n] \rightarrow [m]$ of $\Delta$. Levels in the topos of \emph{cyclic sets}, that is, presheaves on $\Lambda$ again coincide with dimensions. However, restriction along the inclusion $\Delta \hookrightarrow \Lambda$ only respects the coskeletal inclusions of the essential subtoposes, which complicates the comparison. Nonetheless, the results of this section have the following corollary.

\begin{cor}\label{cycliccor} The Aufhebung relation for the topos of cyclic sets is between $2n-1$ and $2n+1$.
\end{cor}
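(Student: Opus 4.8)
The plan is to transfer both bounds from the simplicial case to the cyclic case by exploiting the factorisation of morphisms in $\Lambda$ as a cyclic automorphism followed by a simplicial map, together with the fact (stated in the excerpt) that restriction along $\Delta \hookrightarrow \Lambda$ respects the coskeletal inclusions but not the skeletal ones.

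For the \textbf{upper bound} ($n$-skeletal cyclic sets are $(2n+1)$-coskeletal), I would argue as follows. Let $X$ be an $n$-skeletal cyclic set, and let $u^* X$ be its restriction to $\Delta$, a simplicial set. The key point is that because every morphism $[k]\to[m]$ of $\Lambda$ factors as an automorphism of $[k]$ post-composed with a $\Delta$-morphism, every $k$-simplex of $u^*X$ with $k>n$ is degenerate \emph{in the simplicial sense} --- one can reach it from the generating cells in dimension $\leq n$ by applying a $\Lambda$-morphism, hence an automorphism followed by a simplicial morphism, and since automorphisms preserve dimension, a simplicial-set degeneracy must intervene. Thus $u^*X$ is $n$-skeletal as a simplicial set; wait --- one must be careful, since the automorphisms may obstruct exactly this, so the honest statement is that $u^*X$ is at worst $(n+1)$-skeletal, because an automorphism of $[n+1]$ applied to a non-degenerate $n$-cell produces a non-degenerate $(n+1)$-cell only if no simplicial epi is used, which cannot happen above the generating dimension --- I would check this bookkeeping carefully. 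Granting $u^*X$ is $(n+1)$-skeletal (or $n$-skeletal), Theorem \ref{ssetthm} gives that $u^*X$ is $(2n+1)$-coskeletal as a simplicial set (or $(2n-1)$-coskeletal). Since restriction respects coskeleta, $X$ itself is $(2n+1)$-coskeletal, i.e.\ every $k$-sphere in $X$ with $k>2n+1$ has a unique filler: the underlying simplicial sphere has a unique simplicial filler by $u^*X$ being $(2n+1)$-coskeletal, and the cyclic structure maps on that filler are forced by the cycle-type equations and uniqueness.

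For the \textbf{lower bound} ($n$-skeletal does not imply $2n$-coskeletal), I would exhibit an explicit $n$-skeletal cyclic set with a $(2n+1)$-sphere, wait --- a $2n$-sphere, that is not fillable, mimicking Example \ref{simpex}. Concretely, take the cyclic set freely generated by a vertex $v$, two non-degenerate $(n-1)$-cells $x',y'$ all of whose faces are degenerate at $v$, and two $n$-cells $x,y$ with a single non-degenerate face $x'$ resp.\ $y'$ and all other faces (and the action of the cyclic automorphism) as forced/trivial as possible. One then writes down a $2n$-sphere whose first $n$ faces are a degeneracy of (a $\Lambda$-translate of) $x$ and whose last $n$ faces are a degeneracy of $y$, arranged to satisfy the cycle equations, and observes that no single cell of the cyclic set has both $x'$ and $y'$ among its iterated faces, so the sphere cannot be filled; this shows the Aufhebung is strictly greater than $2n-1$, hence at least $2n$. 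Combining with the upper bound gives that it lies in $\{2n-1, 2n, 2n+1\}$, i.e.\ between $2n-1$ and $2n+1$ as claimed. (In fact the lower-bound example only needs to rule out $(2n-2)$-coskeletality, since $\Delta \hookrightarrow \Lambda$ already transfers the simplicial lower bound of Example \ref{simpex}; the honest content of the corollary is just that the cyclic Aufhebung cannot be pinned to a single value by these methods.)

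The \textbf{main obstacle} is the skeletality half of the upper-bound argument: precisely quantifying how far restriction along $\Delta\hookrightarrow\Lambda$ can fail to preserve skeleta. A priori, applying a cyclic automorphism of $[k]$ to an $n$-skeletal cyclic set could produce simplices in dimensions $>n$ that are not simplicially degenerate, which would push the effective simplicial skeletal dimension above $n$. I expect the correct bookkeeping shows $u^*X$ is $(n+1)$-skeletal, not $n$-skeletal, which is exactly why the upper bound is $2(n+1)-1 = 2n+1$ rather than $2n-1$, and why the corollary asserts a range rather than equality. Getting this off-by-one exactly right --- and confirming that no further loss occurs when translating coskeletality back through the (coskeleton-respecting) restriction --- is the delicate point; the rest is a routine transfer using Theorems \ref{ssetupperboundthm} and \ref{ssetthm} plus an explicit combinatorial example modelled on Example \ref{simpex}.
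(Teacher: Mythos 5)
Your overall strategy matches the paper's: transfer skeletality from cyclic to simplicial (with a loss of one dimension), apply Theorem \ref{ssetupperboundthm}, and transfer coskeletality back (with no loss). You correctly identify the off-by-one in the skeletality direction as the delicate point. The paper handles it via the presentation of $\Lambda$ generated by the simplicial face and degeneracy maps together with one ``extra degeneracy'' $\sig{n}\colon[n]\to[n-1]$ in each dimension, together with the fact that a $\Lambda$-epimorphism's canonical factorisation contains at most one extra degeneracy, which must appear first; hence restricting an $n$-skeletal cyclic set along $\Delta\hookrightarrow\Lambda$ yields an $(n+1)$-skeletal simplicial set. Your ``automorphism then $\Delta$-morphism'' heuristic points in the right direction but does not close this step: after commuting the automorphism past the degeneracy of the Eilenberg--Zilber factorisation, one must still see that only one dimension is lost, and that is exactly what the extra-degeneracy presentation makes transparent. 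The coskeletality transfer you sketch (a simplicial sphere in a cyclic set extends uniquely to a cyclic sphere, and vice versa) is also the paper's argument.

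The genuine gap is in the lower-bound discussion. Your main text proposes an unfillable $2n$-sphere, which would establish Aufhebung at least $2n$; but a $2n$-sphere has $2n+1$ faces, so ``first $n$ faces'' and ``last $n$ faces'' leaves one unaccounted for, and more importantly such an example would contradict the paper's stated expectation that the cyclic Aufhebung equals $2n-1$. What the paper actually does is take the cyclic set generated by the cells of Example \ref{simpex}, verify that it is $n$-skeletal as a cyclic set, and observe that the $(2n-1)$-sphere of that example still has no filler, which rules out $(2n-2)$-coskeletality only and hence gives the lower bound $2n-1$. Your parenthetical backpedal lands near the right place, but ``already transfers the simplicial lower bound'' undersells the work: precisely because $\Delta\hookrightarrow\Lambda$ does not respect skeleta, the free cyclic set on those generators is strictly larger than the simplicial set of Example \ref{simpex}, so one must separately verify that it remains $n$-skeletal as a cyclic set and that the added rotations do not produce a filler.
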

\begin{proof}
The category $\Lambda$ is generated by the face and degeneracy maps of $\Delta$ together with cyclic automorphisms $\tau_n : [n] \rightarrow [n]$ of degree $n+1$ satisfying certain relations. See \cite[Ch.~6]{lodaycyclic} for details. The underlying simplicial set of a cyclic set is its image under the restriction functor $\Set^{\Lambda^{\op}} \rightarrow \Set^{\Delta^{\op}}$.

A cyclic set $X$ is $k$-coskeletal if and only if its underlying simplicial set is $k$-coskeletal: a $k$-sphere in a cyclic set $X$ is a morphism from the $(k-1)$-skeleton of the cyclic set represented by the object $[k] \in \Lambda$ to $X$. Concretely, such a sphere consists of the usual faces $c_0,\ldots, c_k$, together with rotations of these faces, satisfying certain relations. A simplicial sphere in a cyclic set determines a unique cyclic sphere of the same dimension: rotations of the faces will automatically satisfy the desired conditions. Furthermore, a filler for the simplicial sphere uniquely fills the cyclic sphere because the rotations of the simplicial filler will have the desired properties. Conversely, every filler for the cyclic sphere provides a filler for the underlying simplicial sphere in the underlying simplicial set. So a cyclic set is $k$-coskeletal as a cyclic set if and only if the underlying simplicial set is $k$-coskeletal.

By contrast, an $n$-skeletal cyclic set is  $(n+1)$-skeletal as a simplicial set. This follows most immediately from the presentation of the cyclic category $\Lambda$ as the category generated by the simplicial face and degeneracy maps together with an extra degeneracy map $\sig{n} : [n] \rightarrow [n-1]$ for each $n$. This ``extra degeneracy'' satisfies the analogous relations, except that $\sig{n}\del{0}$ is an automorphism of $[n-1]$ of order $n$; this was denoted $\tau_{n-1}$ above. An $n$-simplex in the image of $\sig{n}$ is degenerate, when $X$ is regarded as a cyclic set, but not when $X$ is regarded as a simplicial set. However, any epimorphism in $\Lambda$ can be expressed as a product $\sig{j_0}\cdots\sig{j_t}$ where an ``extra degeneracy'' appears as $\sig{j_0}$, if at all, and nowhere else. It follows that the dimension of a degenerate simplex changes at most by one when we regard the cyclic set as a simplicial set.

We may now compute a bound for the Aufhebung relation. Given an $n$-skeletal cyclic set, it is $(n+1)$-skeletal as a simplicial set, and so $(2n+1)$-coskeletal as a simplicial set, by Theorem \ref{ssetupperboundthm}. By the above discussion, this implies that the cyclic set is $(2n+1)$-coskeletal. Hence, the Aufhebung relation is at most $2n+1$.

For the lower bound, let $X$ be the cyclic set that is generated by the simplices described in Example \ref{simpex}. It is $n$-skeletal as a cyclic set. (Note however that its underlying simplicial set is $(n+1)$-skeletal and larger than the simplicial set described in the example.) The sphere described in the example cannot be filled for the reasons given above. So $X$ is an $n$-skeletal cyclic set which is not $(2n-2)$-coskeletal.
\end{proof}

We actually expect that the Aufhebung relation for cyclic sets is $2n-1$, based on the following intuition: the top dimensional non-degenerate simplices of an $n$-skeletal cyclic set, regarded as an $(n+1)$-skeletal simplicial set, are rotations of degenerate simplices, and we do not expect the process of rotation to substantially affect the combinatorics. We include Corollary \ref{cycliccor} more as an illustration of potential extensions of our results than as a definitive analysis of the essential subtoposes of this topos.

\bibliographystyle{alpha}
\bibliography{biblio}

\begin{thebibliography}{DHK85}

\bibitem[BM08]{bergermoerdijkextension}
C.~Berger and I.~Moerdijk.
\newblock On an extension of the notion of reedy category.
\newblock arXiv:0809.3341 [math.AT], 2008.

\bibitem[CKZ02]{kennettroyzaksanalysis}
M.~Roy C.~Kennett and M.~Zaks.
\newblock Analysis of levels in a topos.
\newblock preprint, 2002.

\bibitem[Con83]{connescohomologiecyclique}
A.~Connes.
\newblock Cohomologie cyclique et foncteurs ext$^n$.
\newblock {\em C. R. Acad. Sci. Paris}, 296(23):953--958, 1983.

\bibitem[DHK85]{dwyerhopkinskancyclic}
W.G. Dwyer, M.J. Hopkins, and D.M. Kan.
\newblock The homotopy theory of cyclic sets.
\newblock {\em Trans. Amer. Math. Soc.}, 291(1):281--289, 1985.

\bibitem[GM03]{grandismauricubical}
M.~Grandis and L.~Mauri.
\newblock Cubical sets and their site.
\newblock {\em Theory and Applications of Categories}, 11:185--201, 2003.

\bibitem[GZ67]{gabrielzisman}
P.~Gabriel and M.~Zisman.
\newblock {\em Calculus of Fractions and Homotopy Theory}, volume~35 of {\em
  Ergebnisse der Math.}
\newblock Springer-Verlag, 1967.

\bibitem[Kan55]{kanabstractI}
D.M. Kan.
\newblock Abstract homotopy i.
\newblock {\em Proc. Nat. Acad. Sci.}, 41:1092--1096, 1955.

\bibitem[KL89]{kellylawverecompletelattice}
G.M. Kelly and F.W. Lawvere.
\newblock On the complete lattice of essential localizations.
\newblock {\em Bull. Soc. Math. Belg. SŽr A}, 41:289--319, 1989.

\bibitem[Law91]{lawveresomethoughts}
F.W. Lawvere.
\newblock Some thoughts on the future of category theory.
\newblock {\em Lecture Notes in Mathematics}, 1488:1--13, 1991.

\bibitem[Law96]{lawvereuiop}
F.W. Lawvere.
\newblock Unity and identity of opposites in calculus and physics.
\newblock {\em Applied Categorical Structures}, 4:167--174, 1996.

\bibitem[Law04]{lawverefunctorialconcepts}
F.W. Lawvere.
\newblock Functorial concepts of complexity for finite automata.
\newblock {\em Theory and Applications of Categories}, 13(10):164--168, 2004.

\bibitem[Law09]{lawvereopenproblems}
F.W. Lawvere.
\newblock Open problems in topos theory.
\newblock 88th Peripatetic Seminar on Sheaves and Logic. Available at
  http://www.cheng.staff.shef.ac.uk/pssl88/lawvere.pdf, 2009.

\bibitem[Lod98]{lodaycyclic}
J-L. Loday.
\newblock {\em Cyclic homology}, volume 301 of {\em Grundlehren der
  Mathematischen Wissenschaften}.
\newblock Springer-Verlag, Berlin, 1998.

\bibitem[May92]{maysimplicial}
J.P. May.
\newblock {\em Simplicial Objects in Algebraic Topology}.
\newblock Chicago Lectures in Mathematics. University of Chicago Press, 1992.

\bibitem[Roy97]{roytopos}
M.~Roy.
\newblock {\em The topos of ball complexes}.
\newblock PhD thesis, State University of New York at Buffalo, 1997.

\bibitem[Str00]{streetpetittopos}
R.~Street.
\newblock The petit topos of globular sets.
\newblock {\em Journal of Pure and Applied Algebra}, 154(1-3):299 -- 315, 2000.

\bibitem[Zak86]{zaksskeletal}
M.~Zaks.
\newblock Does n-skeletal imply n+1-coskeletal?
\newblock preprint, 1986.

\end{thebibliography}

\end{document}